\newcommand{\rn}[1]{\mathsf{#1}}
\newcommand{\efq}{\textsf{EFQ}}
\newcommand{\dne}{\textsf{DNE}}
\newcommand{\gen}{\textsf{GEN}}
\renewcommand{\mp}{\textsf{MP}}
\title[Proof-theoretic Semantics for First-order Logic]{Proof-theoretic Semantics for \\ First-order Logic}
\author{Alexander V. Gheorghiu}
\address{University of Southampton \& University College London}
\email{a.v.gheorghiu@soton.ac.uk}
\newcommand{\uniclose}[1]{\forall \vec{x}#1}
\newcommand{\supp}{\Vdash}
\newcommand{\proves}{\vdash}
\newcommand{\base}[1]{\mathscr{#1}}
\newcommand{\baseB}{\base{B}}
\newcommand{\baseX}{\base{X}}
\newcommand{\setClassical}{\mathsf{C}}
\newcommand{\setIntuitionistic}{\mathsf{I}}
\newcommand{\FV}{\mathsf{FV}}
\newcommand{\powerset}{\mathcal{P}}
\newcommand{\seq}{\triangleright}
\newcommand{\kflat}[1]{#1^\flat}
\newcommand{\ksharpl}[1]{#1^{\natural_1}}
\newcommand{\ksharpr}[1]{#1^{\natural_2}}
\newcommand{\atset}[1]{\mathbb{#1}}
\newcommand{\setEigen}{\mathcal{E}}
\newcommand{\basesup}[1]{\supseteq_{#1}}
\newcommand{\basis}[1]{\mathfrak{#1}}
\def\labelandtag#1#2{\begingroup
   \def\@currentlabel{#2}%
   \phantomsection\label{#1}\endgroup
}
\newcommand{\setVar}{\mathcal{V}}
\newcommand{\setCons}{\mathcal{K}}
\newcommand{\setFunc}{\mathcal{F}}
\newcommand{\setPred}{\mathcal{P}}
\newcommand{\setTerm}{\mathcal{T}}
\newcommand{\closure}{\textsc{cl}}
\newcommand{\setClosedTerm}{\closure(\setTerm)}
\newcommand{\setClosedAtom}
{\closure(\setAtoms)}
\newcommand{\setClosedFormulas}{\closure(\setFormulas)}
\newcommand{\setFormulas}{\mathcal{F}}
\newcommand{\setAtoms}{\mathcal{A}}
\newcommand{\irn}[1]{\rn{#1}\textsf{I}}
\newcommand{\ern}[1]{\rn{#1}\textsf{E}}
\newtheorem{definition}{Definition}
\newtheorem{theorem}[definition]{Theorem}
\newtheorem{example}[definition]{Example}
\newtheorem{proposition}[definition]{Proposition}
\newtheorem{lemma}[definition]{Lemma}
\begin{document}

\begin{abstract}
    Sandqvist gave a \emph{proof-theoretic semantics} (P-tS) for classical logic (CL) that explicates the meaning of the connectives without assuming bivalance. Later, he gave a semantics for intuitionistic propositional logic (IPL). While soundness in both cases is proved through standard techniques, the proof completeness for CL is complex and somewhat obscure, but clear and simple for IPL. Makinson gave a simplified proof of the completeness of classical propositional logic (CPL) by directly relating the P-tS to the logic's extant truth-functional semantics. In this paper, we give an elementary, constructive, and native --- in the sense that it does not presuppose the model-theoretic interpretation of classical logic --- proof of the completeness of the P-tS of CL using the techniques used for IPL. Simultaneously, we give a proof of soundness and completeness for first-order intuitionistic logic (IL).
\end{abstract}
\keywords{Classical Logic, Intuitionistic Logic, Proof-theoretic Semantics, Inferentialism, First-Order Logic}

\maketitle

\section{Introduction} \label{sec:introduction}

Traditionally, logic has been developed around the idea that the meaning of linguistic terms in logic (e.g., predication, the logical signs, and so on) is determined by reference to external entities. We call this view \textit{denotationalism}. A standard account of it within symbolic logic is \emph{model-theoretic semantics} (M-tS) in which a logical language is interpreted within abstract algebraic structures $\mathfrak{M}$ called `models'. Tarski~\cite{Tarski1936,Tarski2002} formalized a standard definition of logical consequence within this framework: a proposition $\phi$ follows model-theoretically from a context $\Gamma$ if and only if every model of $\Gamma$ is also a model of $\phi$:
\[
  \Gamma \models \phi \quad \text{iff} \quad \text{for all models } \mathcal{M}, \text{ if } \mathcal{M} \models \psi \text{ for all } \psi \in \Gamma, \text{ then } \mathcal{M} \models \phi.
\]
Observe that within this conception of logic, \emph{truth} is conceptually \textit{prior} to inference.

Arguably, this approach to semantics leaves something to be desired. Prawitz~\cite{Prawitz1971ideas} says that explaining the meaning of sentences purely in terms of the conditions under which they are `true' does little to clarify how speakers actually understand them. This is a problem as it seems a reasonable requirement of the semantics of a sentence to do so.  Building on this, Dummett~\cite{Dummett1991logical} says succinctly, ``A theory of meaning must explain what it is to understand an expression; and to understand an expression is to be capable of using it correctly.'' He argues that if meaning were merely a matter of reference, we would have no way to explain how speakers learn the meanings of expressions without already possessing a grasp of the entities to which they refer. 

What alternative is there to the denotational or model-theoretic approach? To begin, Prawitz~\cite{Prawitz2007} has argued that \emph{inference} ought be conceptually \textit{prior} to truth. This sets up the idea that one can ground meaning in terms of inference rather than truth. An idea again succinctly summarized by Dummett~\cite{Dummett1991logical}, ``A grasp of the meaning of a logical constant is manifested in a mastery of the inference rules governing it.'' The broader philosophical view that meaning is based on inferential roles has been dubbed \emph{inferentialism} by Brandom~\cite{Brandom2000}. This may be viewed as a particular instantiation of the \emph{meaning-as-use} paradigm of Wittgenstein~\cite{wittgenstein2009philosophical}, in which `use' is given by inferential role.

This inferential conception of meaning is actually quite standard in how we think about it outside textbooks on symbolic logic. Suppose someone were to ask you for the meaning of the proposition, \textit{``Tammy is a vixen."} Stating that it means that the object denoted by \textit{``Tammy"} belongs to the category of objects denoted by \textit{``vixen"} would be unhelpful.  Rather, we may explain that it means \textit{``Tammy is female"} and \textit{``Tammy is a fox"}. Pushed for clarification, we might explain that the statements stand in the following inferential relationships:
\[
\begin{array}{c}
\infer{\text{Tammy is a vixen}}{\text{Tammy is a fox} & \text{Tammy is female}} \qquad
\infer{\text{Tammy is female}}{\text{Tammy is a vixen}} \qquad 
\infer{\text{Tammy is a fox}}{\text{Tammy is a vixen}}
\end{array}
\]
In so doing, we have given the meaning of the expression ``Tammy is a vixen." 

This approach to meaning may be used within a logical language too. Rather than M-tS, we have what Schroeder-Heister~\cite{SEP-PtS} has called \emph{proof-theoretic semantics} (P-tS). It is the approach to meaning based on `proof' rather than `truth'. Of course, `proof' here does not merely constructions within formal systems, but is understood generally as an object  denoting collections of inferences. This shift and its technical details has substantial and subtle mathematical and conceptual consequences. 

There is a rich literature on P-tS. We defer to Schroeder-Heister~\cite{SEP-PtS,Schroeder2006validity,Schroeder2007modelvsproof} for its broad history. An important point is the following striking, albeit somewhat enigmatic, remarks by Gentzen~\cite{Gentzen}:
\begin{quotation}
    ``The introductions represent, as it were, the ‘definitions’ of the symbols concerned, and the eliminations are no more, in the final analysis, than the consequences of these definitions. This fact may be expressed as follows: In eliminating a symbol, we may use the formula with whose terminal symbol we are dealing only `in the sense afforded it by the introduction of that symbol.'''
\end{quotation}
While he does not explicitly state how introduction rules provide meaning to logical connectives, the foundational intuition for logical inferentialism are unmistakably present. They also suggest where to begin in developing such a semantic theory.  
Consider the inference rules governing conjunction ($\land$):
\[
\infer{\phi \land \psi}{\phi & \psi} \qquad
\infer{\phi}{\phi \land \psi} \qquad 
\infer{\psi}{\phi \land \psi}
\]
They are analogous to the propositions concerning Tammy above, justifying the `and' used there. These rules provide meaning of ``$\phi \land \psi$'' the same way as the rules above provided the meaning of ``Tammy is a vixen''. Of course, this does not yet explain the origin of the elimination rules.

There are several branches of research within P-tS. Much of it has followed an account given by Prawitz~\cite{Prawitz2006natural,Prawitz1973towards} (cf. Schroeder-Heister~\cite{Schroeder2006validity}) based on his normalization results for natural deduction. We elide the details as they are not directly relevant to the work in this paper. Here we are concerned with a related formalism (cf. Gheorghiu and Pym~\cite{PTV-BES}) called \emph{base-extension semantics} (B-eS).

In B-eS, one seeks to give the semantics of logical connectives by defining a \emph{support} relation representing a semantic judgment (analogous to the role of satisfaction in M-tS). It differs from M-tS in that one does not evaluate validity relative to models but rather relative to collections of inference rules over atoms known as \emph{bases}. These rules may include, for example, the following kinds of figures:
\[
\infer{~~C~~}{} \qquad \infer{C}{P_1 & \ldots & P_n} \qquad \infer{C}{\deduce{P_1}{[\mathbb{P}_1]} & \ldots & \deduce{P_n}{[\mathbb{P}_n]}}
\]
A formula $\phi$ follows from a context $\Gamma$ if and only if every $\base{B}$ supporting $\Gamma$ also supports $\phi$; that is,
\[
\Gamma \supp \phi \quad \text{iff} \quad \text{for all bases } \base{B}, \text{ if } \supp_{\base{B}} \psi \text{ for all } \psi \in \Gamma, \text{ then } \supp_{\base{B}} \phi.
\]
where $C$, $P_1,...,P_n$ are atoms and $\mathbb{P}_1,...,\mathbb{P}_n$ are finite sets of atoms. 
Schroeder-Heister~\cite{Schroeder2012categorical} has observed that both M-tS and B-eS adhere to the \emph{standard dogma} of semantics, in which consequence is understood as the transmission of some categorical notion, and he has been a critic of this view. Though B-eS is closely related to M-tS, especially possible world semantics in the sense of Beth~\cite{Beth1955} and Kripke~\cite{kripke1965semantical} (cf. Goldfarb~\cite{goldfarb2016dummett}, Makinson~\cite{makinson2014inferential}, and Stafford and Nascimento~\cite{stafford2023,nascimentothesis}), it remains subtle.  

Sandqvist~\cite{Sandqvist2005inferentialist,Sandqvist2009CL} studied B-eS as a way to present classical logic (CL) without prior commitment to \emph{bivalence}. By `bivalence' we mean that there are exactly two truth values, \emph{True} and \emph{False}. The critique of the denotationalist conception of meaning above renders such a commitment dubious. Nonetheless, classical logic has merit in that, if nothing else, it appears to capture the logical foundations of mathematics rather well. Therefore,  Dummett~\cite{Dummett1978} writes:
\begin{quotation}
    ``In the resolution of the conflict between [the view that generally accepted classical modes of inference ought to be theoretically accommodated, and the demand that any such accommodation be achieved without recourse to bivalence] lies, as I see it, one of the most fundamental and intractable problems in the theory of meaning; indeed, in all philosophy.''
\end{quotation}
This puts Sandqvist's result in perspective: it is a major achievement in both philosophy and mathematical logic. One technical requirement for soundness and completeness is that bases contain rules of the form
\[
\infer{~~C~~}{} \qquad \text{and} \qquad  \infer{~~C~~}{P_1 & \ldots P_n}
\]
Including rules with discharge results in incompleteness. 

Makinson~\cite{makinson2014inferential} remarks that Sandqvist's proof of completeness is ``not very helpful'' for understanding the system. Accordingly, he has provided an alternative proof of completeness for classical \emph{propositional} logic (CPL) by directly relating the proposed B-eS to the logic's existing truth-functional semantics --- note that he reverses the terms ``soundness'' and ``completeness'' for reasons he discusses. Eckhardt and Pym~\cite{Eckhardt,Eckhardt2} have used this approach to systematically develop the B-eS for normal modal logic.  While Makinson's proof clarifies the classicality of the B-eS for CL, it does so by committing to the model-theoretic interpretation of the logic. In so doing, it forfeits genuinely exposing the inferential nature of the semantics. Rather than demonstrating how inferential reasoning alone recovers classical logic, it sidesteps the issue. Therefore, we seek a completeness proof that also illustrates inferential foundations of the framework. 

Sandqvist~\cite{Sandqvist2015IL} has also provided a B-eS for \emph{intuitionistic propositional logic} (IPL). It follows the semantics for CL for the connectives $\land, \to, \bot$ but takes the following clause for disjunction:
\[
\begin{array}{lcl}
\supp_{\base{B}} \phi \lor \psi & \mbox{iff} & \mbox{for any base $\base{C} \supseteq \base{B}$ and any atom $p$,}  \\ & & \mbox{if $\phi \supp_{\base{C}} p$ and $\psi \supp_{\base{C}} p$, then $\supp_{\base{C}} p$}
\end{array}
\]
For completeness, the bases must range over sets that include rules with discharge,
\[
\infer{C}{\deduce{P_1}{[\mathbb{P}_1]} & \ldots & \deduce{P_n}{[\mathbb{P}_n]}}
\]

The results by Piecha et al~\cite{Piecha2015failure,Piecha2016completeness,Piecha2019incompleteness} show that intuitionistic logic is incomplete for the same semantics but with a more typical clause for disjunction ---- viz. $\supp_{\base{B}} \phi \lor \psi$ iff $\supp_{\base{B}} \phi$ or $\supp_{\base{B}} \psi$). Stafford~\cite{Stafford2021} later showed that the this is a semantics for variants of Kriesel-Putnam logic. 

The clause for disjunction can be justified in several ways. Dummett~\cite{Dummett1991logical} appears to have been the principal inspiration. The clause also recalls Prawitz’s~\cite{Prawitz2006natural} second-order definition of logical signs. A third perspective, followed by Gheorghiu et al.~\cite{gheorghiu2024proof,IMLL,ggp2024practice}, suggests that it arises from treating \emph{definitional reflection} --- in the sense of Halln\"as and Schroeder-Heister~\cite{hallnas1990proof,hallnas1991proof} --- as a closure condition on introduction rules that determines meaning. Of course, definitional reflection can be used either with introduction or with eliminations in the role of definiens --- see de Campos Sanz and Piecha~\cite{sanz2009inversion}. Whatever the case, we note the restriction to quantification over atomic propositions. This remains to be justified philosophically, though the clause does extend to full formulae as expected:
\[
\begin{array}{lcl} \supp_{\base{B}} \phi \lor \psi & \mbox{iff} & \mbox{for any base $\base{C} \supseteq \base{B}$ and any formula $\chi$,} \\ & & \mbox{if $\phi \supp_{\base{C}} \chi$ and $\psi \supp_{\base{C}} \chi$, then $\supp_{\base{C}} \chi$.} \end{array}
\]
However, starting with this formulation would not yield an \emph{inductive} definition of support.

What is most notable about the B-eS of intuitionistic logic is the proof of completeness. Sandqvist~\cite{Sandqvist2015IL} remarks:
\begin{quote}
``The mathematical resources required for the purpose are quite elementary; there will be no need to invoke canonical models, K\"onig’s lemma, or even bar induction. The proof proceeds, instead, by \emph{simulating} an intuitionistic deduction using basic sentences within a base specifically tailored to the inference at hand.''
\end{quote}
Specifically, it proceeds by simulating a $\mathsf{NJ}$-derivation, where $\mathsf{NJ}$ is the natural deduction system for intuitionistic logic given by Gentzen~\cite{Gentzen}. We provide more details below but note that Gheorghiu et al.~\cite{IMLL,BI,NAF} have shown that this approach is quite versatile in the context of constructive logics. Importantly, this approach to completeness clearly reveals how the inferential content of the semantics relates to proof-theoretic expression. 

In this paper, we present a modular such completeness proof for both classical and intuitionistic first-order logic. In the first case, such a proof is lacking in the literature, so our work fills a gap. In the second case, we extend existing work to the first-order setting which demands simulating variables inside bases. The actual semantic clauses are the same for both logics; the only difference lies in the notion of base.  The modularity of this approach suggests that much greater emphasis should be placed on how the notion of base, which is assumed prior to the definition of logical signs, influences their meaning. We regard this as a crucial and unresolved problem within P-tS. 

To deliver this work requires overcoming a couple of challenges. Firstly, to `simulate' $\mathsf{NJ}$-derivations, Sandqvist~\cite{Sandqvist2015IL} makes essential use of rules with discharge. Since such rules are not permitted in the B-eS for classical logic, we require some way of simulating classical deductions, including those with implications, without using discharge. To do this, rather than simulate derivations in the sense of Gentzen~\cite{Gentzen}, we simulate derivations in the sense of Hilbert (see Kleene~\cite{Kleen1952,kleene1967}). This broadens the scope of what can be achieved with a simulation-style proof of completeness. Secondly, the move to first-order logic requires careful treatment of quantified variables in the simulation. We solve this problem by introducing eigenvariables in the simulation and carefully tracking their behaviour. 
\smallskip

\noindent \textbf{Roadmap.} In Section~\ref{sec:fol} we give a terse proof-theoretic background to first-order classical and intuitionistic logic to fix some notations. We proceed in Section~\ref{sec:bes} to give the formal definition of their B-eS. In Section~\ref{sec:soundness} and Section~\ref{sec:completeness} we prove soundness and completeness, respectively. The paper ends in Section~\ref{sec:conclusion} with some discussion about the results presented. 

\smallskip

\noindent \textbf{Notations.} We briefly outline some of the notations used throughout the paper for quick reference; the details are given below. 

\noindent Conventions:
\begin{itemize}
\item $x,y,z, \ldots$ denote variables, 
\item $t,s,r, \ldots$ denote terms
\item $P, Q, \ldots$ denote either predicates or atomic formula
\item $\phi, \psi, \chi$ denote formulae, 
\item $\atset{P}, \atset{Q}, \atset{R}, \ldots$ denote (possibly empty, possibly infinite) sets of atoms
\item $\Gamma, \Delta, \Xi, \ldots$ denote (possibly empty, possibly infinite) sets of formulae, and 
\item $\base{A}, \base{B}, \base{C}, \ldots$ denote atomic systems 
\end{itemize}
The sets of \emph{all} variables, terms, atoms, and formulae are $\setVar$, $\setTerm$, $\setAtoms$, and $\setFormulas$, respectively; the subsets containing only closed elements (where appropriate) are denoted $\setClosedTerm$, $\setClosedAtom$, $\setClosedFormulas$, respectively.

We write $\FV$ to denote the function that takes a term, atom, or formula and returns the set of its free variables. We write $[x \mapsto t]$ to denote a substitution function that replaces all free occurrences of $x$ by $t$. 

We use $\bot, \land, \lor, \to, \forall, \exists$ as the logical signs. While $\bot$ is sometimes taken as an atomic formula, especially in the first-order setting, we reserve that terminology for formulas of the form $P(t_1, \ldots, t_n)$, where $P$ is a predicate symbol and $t_1, \ldots, t_n$ are terms. Intuitively, $\bot$ has some meaning that is expressed in terms of such atomic formulae, rendering it semantically complex.

\section{Background: First-order Classical and Intuitionistic Logic}\label{sec:fol}

In this section, to keep the paper self-contained, we provide a concise definition of both classical and intuitionistic first-order logic, ensuring that key terms and notation are clear. We assume familiarity with these logics and defer to van Dalen~\cite{van1994logic} for further details.

We assume the following are fixed denumerable sets of symbols:
\begin{itemize}[label=--]
    \item $\setVar$  ---  variables $x, y, z, \ldots$
    \item $\setCons$  ---  constants $a, b, c, \ldots$
    \item $\setFunc_n$  ---  function symbols of arity $n$, $f, g, h, \ldots$ 
    \item $\setPred_n$  ---  predicate symbols of arity $n$, $P, Q, R, \ldots$
\end{itemize}
Relative to these sets, we develop the notions of \emph{terms}, \emph{atomic formulas}, and \emph{well-formed formulas} (wffs) as usual. The set of all wffs is denoted $\setFormulas$. We may write $\neg \phi$ to abbreviate $\phi \to \bot$. Again, $\bot$ is not considered an `atomic formula' for it has logical content.

Given a formula $\phi$, its subformulae are defined inductively as follows:
\begin{itemize}
    \item $\phi$ is a subformula of $\phi$
    \item if $\phi=\phi_1\circ\phi_2$ for any $\circ = \{\land,\lor,\to\}$, then the subformulae of $\phi_1$ and $\phi_2$ are subformulae of $\phi$ 
    \item if $\phi = \forall x\psi$ or $\phi = \exists x \psi$, then the subformulae of $\psi[x \mapsto t]$, for every $t \in \setClosedTerm$, are subformulae of $\phi$.
\end{itemize}
Observe that only closed formulae are subformulae. 

We define classical and intuitionistic \emph{consequence} via a Frege-Hilbert axiomatization --- see Troelstra and Schwichtenberg~\cite{Troelstra2000basic}, Kleene~\cite{kleene1967}, and Church~\cite{Church1956} for extensive historical notes. To this end, we require substitutions. We write $[x \mapsto t]$ to denote a \emph{term substitution}; that is, $t'[x \mapsto t]$ denotes the result of uniformly replacing every occurrence of $x$ in $t'$ with $t$. It is clear that the result remains a term. This extends to formulas as expected, where \emph{bound} variables are not substituted. Carefully distinguishing free and bound variables is essential for a precise formulation of first-order logic. Therefore, let $\FV:\setFormulas \to \powerset(\setVar)$ map a formula $\phi$ to the set of its free variables $\FV(\phi)$. A term, atom, or formula is \emph{closed} if it has no free variables. The set of closed terms is denoted $\setClosedTerm$, the set of closed atoms $\setClosedAtom$, and the set of closed formulas $\setClosedFormulas$.

To define \emph{classical} and \emph{intuitionistic} consequence we introduce the concept of an \emph{axiomatization}. Fix a set $\mathbb{X}$ of \emph{formula-variables}. From formula-variables, one constructs \emph{formula-schemes} according to the  grammar
\[
\mathfrak{s}, \mathfrak{t} ::= X \in \mathbb{X} \mid \bot \mid \mathfrak{s} \land \mathfrak{t} \mid \mathfrak{s} \lor \mathfrak{t}  \mid \mathfrak{s} \to \mathfrak{t}  \mid \forall x \mathfrak{s} \mid \exists x \mathfrak{s} \mid \mathfrak{s}_x^t
\]
where here $x$ ranges over variables $(\setVar)$ and $t$ over terms $(\setTerm)$. 

\begin{example}
   Let $X,Y,Z$ be formula-variables. The string $X \to (Y \to X)$ is a formula scheme. This is distinct from a formula $\phi \to (\psi \to \phi)$, where $\phi, \psi, \chi$ are formulas. The string $\bot \to (\bot \to \bot)$ is both a formula and a formula scheme.
\end{example}

As the name suggests, a formula scheme describes the structure of formulas. However, to use it, we require \emph{instantiation}, whereby the formula-variables are systematically and completely replaced with formulas. An \emph{instantiation} is a function $\iota:\mathbb{X} \to \setFormulas$. It extends to formula-schemes as follows:
\[
\iota (\mathfrak{s}) := 
\begin{cases}
    \iota(\mathfrak{s}) & \mbox{if } \mathfrak{s} \in \mathbb{X} \\
    \bot & \mbox{if } \mathfrak{s} = \bot \\
   \iota (\mathfrak{s}_1) \circ   \iota (\mathfrak{s}_2)  & \mbox{if } \mathfrak{s} = \mathfrak{s}_1 \circ \mathfrak{s}_2, \quad \circ \in \{\land, \lor, \to\} \\
    Q x \iota(\mathfrak{s})  & \mbox{if } \mathfrak{s} = Q x \mathfrak{s}, \quad Q \in \{\forall, \exists\}  \\
       \iota(\mathfrak{s})[x \mapsto t]  & \mbox{if } \mathfrak{s} = \mathfrak{s}_x^t  \\
\end{cases}
\]
\begin{example}
    Let $\iota$ be such that $X \mapsto P(x)\land Q(t)$, where $P$ and $Q$ are unary predicates, $x$ is a variable, and $t$ is a term. Then:
    \[
    \iota(\forall x X \to X_x^t) =\forall x(P(x) \land Q(t)) \to (P(t) \land Q(t)).
    \]
\end{example}

Given a set of fomula-schemes $\mathsf{A}$, we define consequence $\Gamma \proves_{\mathsf{A}} \phi$ to mean that there is a valid argument from the axioms $\mathsf{A}$ and assumptions $\Gamma$ that ends with $\phi$. 

\begin{definition}[Consequence from an Axiomatization] \label{def:consequence}
   Let $\mathsf{A}$ be an axiomatization. The $\mathsf{A}$-consequence relation $\proves_\mathsf{A}$ is defined inductively as follows: 
     \begin{itemize}[label=--]
       \item \textsc{Axiom}. If $\mathfrak{s} \in \mathsf{A}$ and $\iota$ is an instantiation, then $\Gamma \proves_\mathsf{A} \iota(\mathfrak{s})$.
       \item \textsc{Hypothesis}. If $\phi \in \Gamma$, then $\Gamma \proves_\mathsf{A} \phi$.
       \item \textsc{Modus Ponens}. If $\Gamma \proves_\mathsf{A} \phi$ and $\Gamma \proves_\mathsf{A} \phi \to \psi$, then $\Gamma \proves_\mathsf{A} \psi$.
       \item \textsc{Generalization}. If $\Gamma \proves_\mathsf{A} \psi \to \phi$ and $x \not \in \FV(\psi)$, then $\Gamma \proves_\mathsf{A} \psi \to \forall x\phi$.
       \item \textsc{Existential Instantiation}. If $\Gamma \proves_\mathsf{A} \phi \to \psi$ and $x \not \in \FV(\psi)$, then $\Gamma \proves_\mathsf{A} \exists x \phi \to \psi$.
    \end{itemize}
\end{definition}

There are many axiomatizations of classical and intuitionistic logic in the literature. The following appears in Kleene~\cite{Kleen1952}:

\begin{definition}[Axioms of First-order Logic]
We define two sets of axioms:
\begin{itemize}
    \item $\setClassical$ comprises all the formula-schemes in Figure~\ref{fig:fol}.
    \item $\setIntuitionistic$ comprises all the formula-schemes in Figure~\ref{fig:fol}, except $\dne$.
\end{itemize} 
\end{definition}

\begin{figure}[t]
\hrule
\vspace{2mm}

\[
\begin{array}{ l @{\hspace{-1mm}}  r }
      X \to (Y \to X) & \hspace{0.4\linewidth}  (\textsc{K}) \\
  (X \to (Y \to Z)) \to \big((X \to Y) \to (X \to Z)\big) & (\textsc{S}) \\
     \forall x X \to X_x^t  & 
   (\ern \forall) \\[1mm]
 X \to (Y \to (X \land Y)) & (\irn{\land}) \\
 X \land Y \to X & (\ern \land_1) \\
 X \land Y \to Y & (\ern \land_2) \\
 X \to X \lor Y & (\irn \lor_1) \\
 Y \to  X \lor Y & (\irn \lor_2) \\
 (X \to Z) \to ((Y \to Z) \to  (X \lor Y \to Z)) & (\ern \lor) \\  
   X_x^t \to \exists x X  & 
   (\irn \exists) \\
   (X \to Y) \to \big((X \to \neg Y) \to \neg X \big) & (\irn \neg)\\
  (X \to \bot) \to (X \to Y) & (\efq) \\
   \hspace{-2ex} \dotfill & \dotfill \\[1mm]
    \neg \neg X \to X & (\dne)
\end{array}
\]

\vspace{2mm}
\hrule
\caption{Axiomatization of First-order Logic ($\setIntuitionistic$ \& $\setClassical$)} \label{fig:fol}
\end{figure}

We do not require much meta-theory about provability in first-order classical and intuitionistic logic. However, the following results will simplify later proofs:

\begin{proposition}[Deduction Theorem] \label{prop:deduction-theorem}
    If $\phi, \Gamma \proves \psi$, then $\Gamma \proves \phi \to \psi$.
\end{proposition}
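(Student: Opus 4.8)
\section*{Proof proposal}

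The plan is to argue by induction on the inductive definition of $\phi, \Gamma \proves \psi$ (Definition~\ref{def:consequence}), building in each case a derivation of $\Gamma \proves \phi \to \psi$ from the derivation(s) supplied by the relevant generating clause. Throughout I will use that the implicational axioms $\textsc{K}$ and $\textsc{S}$ already generate the full positive implicational calculus, so that $\phi \to \phi$, the weakening scheme $\chi \to (\phi \to \chi)$, and the permutation scheme $(\phi \to (\chi \to \theta)) \to (\chi \to (\phi \to \theta))$ are available as $\proves$-theorems to be combined with \textsc{Modus Ponens}; together with $\irn{\land}$ and $\ern{\land}$ these also let me move freely between $\phi \to (\chi \to \theta)$ and $(\phi \land \chi) \to \theta$.

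For the base cases: if $\psi$ is obtained by \textsc{Axiom} or is a hypothesis lying in $\Gamma$, then $\Gamma \proves \psi$ already, and the instance $\psi \to (\phi \to \psi)$ of $\textsc{K}$ followed by \textsc{Modus Ponens} gives $\Gamma \proves \phi \to \psi$; if $\psi$ is the distinguished hypothesis $\phi$, then $\Gamma \proves \phi \to \phi$ is the $\textsc{K}$/$\textsc{S}$-theorem noted above. For the \textsc{Modus Ponens} step the premises give, by the induction hypothesis, $\Gamma \proves \phi \to \chi$ and $\Gamma \proves \phi \to (\chi \to \psi)$, and one instance of $\textsc{S}$ with two applications of \textsc{Modus Ponens} yields $\Gamma \proves \phi \to \psi$. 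These cases are purely propositional and routine.

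The substantive work lies in the two quantifier clauses. Suppose $\psi = \alpha \to \forall x\beta$ arises by \textsc{Generalization} from $\phi, \Gamma \proves \alpha \to \beta$ with $x \notin \FV(\alpha)$. The induction hypothesis gives $\Gamma \proves \phi \to (\alpha \to \beta)$, which I rewrite as $\Gamma \proves (\phi \land \alpha) \to \beta$; I then apply \textsc{Generalization} with antecedent $\phi \land \alpha$ to obtain $\Gamma \proves (\phi \land \alpha) \to \forall x\beta$ and rearrange to $\Gamma \proves \phi \to (\alpha \to \forall x\beta)$. The dual clause, \textsc{Existential Instantiation} producing $\psi = \exists x\alpha \to \beta$ from $\phi, \Gamma \proves \alpha \to \beta$ with $x \notin \FV(\beta)$, is symmetric: from $\Gamma \proves \phi \to (\alpha \to \beta)$ I permute to $\Gamma \proves \alpha \to (\phi \to \beta)$, apply \textsc{Existential Instantiation} with consequent $\phi \to \beta$ to get $\Gamma \proves \exists x\alpha \to (\phi \to \beta)$, and permute back.

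The main obstacle is exactly the eigenvariable side condition on these two rules. Re-applying \textsc{Generalization} to $(\phi \land \alpha) \to \beta$ needs $x \notin \FV(\phi \land \alpha)$, and the \textsc{Existential Instantiation} step needs $x \notin \FV(\phi \to \beta)$; in each case the condition on $\alpha$ (resp.\ $\beta$) is in hand, but we additionally require $x \notin \FV(\phi)$. Some such hypothesis is genuinely unavoidable: one can derive $P(x) \proves \forall x\, P(x)$ yet, by soundness, not $\proves P(x) \to \forall x\, P(x)$. I therefore read the statement for $\phi$ a \emph{sentence}, consistent with the paper's standing convention that only closed formulae occur (so that $\FV(\phi) = \emptyset$ and both side conditions hold automatically); for open $\phi$ the theorem must be relativised to derivations performing no generalisation on a variable free in $\phi$.
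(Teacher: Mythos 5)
Your induction is the standard argument and it is essentially correct, but note first that there is no internal proof to compare it against: the paper does not prove Proposition~\ref{prop:deduction-theorem}, it simply records it as a well-known result and cites Franks for its history. Your write-up supplies exactly the argument such a citation gestures at: \textsc{K} and \textsc{S} handle the \textsc{Axiom}, \textsc{Hypothesis}, and \textsc{Modus Ponens} cases, and the import/export and permutation manoeuvres (all derivable from \textsc{K}, \textsc{S}, $\irn\land$, $\ern\land_1$, $\ern\land_2$) reduce the two quantifier rules to re-applications of \textsc{Generalization} and \textsc{Existential Instantiation} with enlarged antecedent (resp.\ consequent). All of those steps check out.

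The most valuable part of your proposal is the caveat at the end, and you can state it with more confidence: in this formulation the unrestricted statement is genuinely \emph{false}, not merely hard to prove. Your counterexample is right: $P(x) \proves \forall x\, P(x)$ is derivable (use \textsc{K} to weaken the hypothesis to $(\bot\to\bot)\to P(x)$, apply \textsc{Generalization} --- its side condition only inspects the closed antecedent $\bot\to\bot$ --- and detach against the \textsc{K}/\textsc{S}-theorem $\bot\to\bot$), whereas $\proves P(x)\to\forall x\, P(x)$ would contradict soundness (Theorem~\ref{thm:soundness}): by clause \ref{cl:wff} its validity amounts to $\supp \forall x\bigl(P(x)\to\forall x\, P(x)\bigr)$, which fails, for instance, over the base $\{\Rightarrow P(t)\}$ since a second closed term $s$ gives $\not\supp_{\{\Rightarrow P(t)\}} P(s)$. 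So the proposition requires precisely the proviso you impose --- $\phi$ a sentence, or more finely, no \textsc{Generalization}/\textsc{Existential Instantiation} step on a variable free in $\phi$. This restriction is harmless for the paper, since every invocation of the result (Propositions~\ref{prop:or-elim}--\ref{prop:exists-elim} as used in Lemma~\ref{lem:CPL:sharpening}, where $\Delta \subseteq \setClosedFormulas$) concerns closed formulae; but the statement as printed ought to carry the side condition, and your proof is the correct proof of the correctly qualified statement.
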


This is a well-known result with a complex history --- see Franks~\cite{curtis} for a summary. The following are immediate corollaries:

\begin{proposition}\label{prop:or-elim}
    If $\Gamma \proves_{\rn{I}} \phi \lor \psi$, $\phi, \Gamma \proves_{\rn{I}} \chi $, and $\psi, \Gamma \proves_{\rn{I}} \chi $, then  $\Gamma \proves_{\rn{I}}  \chi$.
\end{proposition}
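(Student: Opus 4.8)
The plan is to reduce this to the Deduction Theorem (Proposition~\ref{prop:deduction-theorem}) together with the disjunction-elimination axiom $(\ern\lor)$, using nothing beyond instantiation and repeated Modus Ponens. This is exactly why the paper flags the statement as an immediate corollary: once the Deduction Theorem lets us trade hypotheses for implications, the axiomatic machinery of Definition~\ref{def:consequence} does the rest.

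First I would apply the Deduction Theorem to each of the two side hypotheses. From $\phi, \Gamma \proves_{\rn{I}} \chi$ I obtain $\Gamma \proves_{\rn{I}} \phi \to \chi$, and from $\psi, \Gamma \proves_{\rn{I}} \chi$ I obtain $\Gamma \proves_{\rn{I}} \psi \to \chi$. This moves both auxiliary deductions onto the common context $\Gamma$ in conditional form, which is precisely the shape the elimination axiom consumes.

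Next I would instantiate the axiom scheme $(\ern\lor)$, namely $(X \to Z) \to ((Y \to Z) \to (X \lor Y \to Z))$, under the instantiation $X \mapsto \phi$, $Y \mapsto \psi$, $Z \mapsto \chi$. By the \textsc{Axiom} clause of Definition~\ref{def:consequence} this yields $\Gamma \proves_{\rn{I}} (\phi \to \chi) \to ((\psi \to \chi) \to (\phi \lor \psi \to \chi))$. Note this instance lives in $\setIntuitionistic$, since $(\ern\lor)$ is among the intuitionistic axioms, so the derivation stays within $\proves_{\rn{I}}$.

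Finally I would chain three applications of Modus Ponens. Combining the instantiated axiom with $\Gamma \proves_{\rn{I}} \phi \to \chi$ gives $\Gamma \proves_{\rn{I}} (\psi \to \chi) \to (\phi \lor \psi \to \chi)$; a further step against $\Gamma \proves_{\rn{I}} \psi \to \chi$ gives $\Gamma \proves_{\rn{I}} \phi \lor \psi \to \chi$; and a last Modus Ponens against the hypothesis $\Gamma \proves_{\rn{I}} \phi \lor \psi$ delivers $\Gamma \proves_{\rn{I}} \chi$, as required. There is no genuine obstacle here — the argument is pure bookkeeping once the Deduction Theorem is available; the only point demanding care is that all the derived judgements share the single context $\Gamma$, so that each Modus Ponens step is licensed by the relevant clause of Definition~\ref{def:consequence}.
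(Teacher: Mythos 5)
Your proof is correct and is exactly the argument the paper intends: the paper gives no explicit proof, merely labelling the proposition an ``immediate corollary'' of the Deduction Theorem (Proposition~\ref{prop:deduction-theorem}), and the canonical way to discharge that claim is precisely your route --- Deduction Theorem on the two side hypotheses, the \textsc{Axiom} clause applied to the scheme $(\ern\lor)$, and three applications of \textsc{Modus Ponens} in the shared context $\Gamma$.
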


\begin{proposition}\label{prop:efq}
    Let $\rn{A} \in \{\rn{C}, \rn{I}\}$. If $\Gamma \proves_{\rn{A}} \bot$, then $\Gamma \proves_{\rn{A}} \phi$ for any $\phi \in \setFormulas$.
\end{proposition}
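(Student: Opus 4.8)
The plan is to derive the theorem $\bot \to \phi$ from the axioms alone and then discharge the hypothesis $\Gamma \proves_{\rn{A}} \bot$ by a single application of \textsc{Modus Ponens}. Since $\efq$ belongs to both $\setClassical$ and $\setIntuitionistic$, the argument is uniform in $\rn{A} \in \{\rn{C}, \rn{I}\}$, so I suppress the subscript throughout.

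First I would establish $\Gamma \proves \bot \to \bot$. This is the identity theorem instantiated at $\bot$, provable from $(\textsc{K})$ and $(\textsc{S})$ in the usual Hilbert-style fashion; alternatively, and more cheaply, it follows at once from the Deduction Theorem (Proposition~\ref{prop:deduction-theorem}), since $\bot, \Gamma \proves \bot$ holds by \textsc{Hypothesis}. Next, instantiating the axiom $\efq$, namely $(X \to \bot) \to (X \to Y)$, with $X := \bot$ and $Y := \phi$ yields $\Gamma \proves (\bot \to \bot) \to (\bot \to \phi)$ by \textsc{Axiom}. Applying \textsc{Modus Ponens} to this together with $\Gamma \proves \bot \to \bot$ gives $\Gamma \proves \bot \to \phi$. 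Finally, \textsc{Modus Ponens} applied to $\Gamma \proves \bot \to \phi$ and the assumed $\Gamma \proves \bot$ delivers $\Gamma \proves \phi$, as required.

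There is no genuine obstacle here; the statement is, as advertised, an immediate corollary. The only point meriting a remark is the derivation of $\bot \to \bot$: without the Deduction Theorem one must reconstruct the identity theorem from $(\textsc{K})$ and $(\textsc{S})$, whereas with it the step is trivial. Note also that the argument invokes only $(\textsc{K})$ (or, if one prefers the identity-theorem route, $(\textsc{K})$ and $(\textsc{S})$) and $\efq$, all of which are present in $\setIntuitionistic$; this confirms that $\dne$ plays no role and that the result holds verbatim for both logics.
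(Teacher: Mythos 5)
Your proof is correct: the instantiation of $\efq$ at $X := \bot$, $Y := \phi$, combined with $\Gamma \proves \bot \to \bot$ (obtained from the Deduction Theorem) and two applications of \textsc{Modus Ponens}, is exactly the kind of routine Hilbert-style derivation the paper has in mind when it states this proposition, without further detail, as an immediate corollary of the Deduction Theorem (Proposition~\ref{prop:deduction-theorem}). Your closing observation that only axioms common to $\setClassical$ and $\setIntuitionistic$ are used, so that the result is uniform in $\rn{A}$, matches the paper's statement as well.
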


\begin{proposition}\label{prop:exists-elim}
    If $\Gamma \proves_{\rn{I}} \exists x \phi$ and $\phi[x\mapsto t], \Gamma \proves_{\rn{I}} \chi$, where $t\in\setClosedTerm$ does not occur elsewhere in $\Gamma$, $\phi$, or $\chi$, then  $\Gamma \proves_{\rn{I}}  \chi$ .
\end{proposition}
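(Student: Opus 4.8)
The plan is to reduce the statement to the \textsc{Existential Instantiation} rule of Definition~\ref{def:consequence}, after first discharging the hypothesis $\phi[x \mapsto t]$ and then trading the fresh closed term $t$ for a genuine variable that the rule is able to quantify over. First I would apply the Deduction Theorem (Proposition~\ref{prop:deduction-theorem}) to the second hypothesis $\phi[x \mapsto t], \Gamma \proves_{\rn{I}} \chi$, obtaining $\Gamma \proves_{\rn{I}} \phi[x \mapsto t] \to \chi$.

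The difficulty is that \textsc{Existential Instantiation} quantifies over a \emph{variable}, whereas $t$ is a closed term; moreover $\phi[x \mapsto t]$ need no longer contain $x$ at all, so there is nothing yet to generalise. I would therefore invoke a substitution lemma: if $\Gamma \proves_{\rn{I}} \theta$ and $y$ is a variable occurring nowhere in the derivation, then $\Gamma[t \mapsto y] \proves_{\rn{I}} \theta[t \mapsto y]$, where $[t \mapsto y]$ replaces every occurrence of the closed term $t$ by $y$. This is proved by a routine induction on the derivation; the only points needing care are that an instance of an axiom scheme remains such an instance under the substitution (compose the instantiation with $[t \mapsto y]$), and that the freshness side conditions of \textsc{Generalization} and \textsc{Existential Instantiation} are preserved. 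The latter holds because $t$ is closed and $y$ is fresh, so substituting $t \mapsto y$ can only add $y$ — and no previously bound variable — to any relevant set of free variables, and the freshness of $y$ also prevents capture under a quantifier.

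Applying this lemma with $y$ chosen fresh for $\Gamma$, $\phi$, $\chi$, and $t$, and using the hypothesis that $t$ occurs in none of $\Gamma$, $\phi$, $\chi$, the substitution leaves $\Gamma$ and $\chi$ untouched and turns $\phi[x \mapsto t]$ into $\phi[x \mapsto y]$ (since $t$ does not occur in $\phi$, its occurrences in $\phi[x \mapsto t]$ are exactly the images of the free $x$'s). Hence $\Gamma \proves_{\rn{I}} \phi[x \mapsto y] \to \chi$. As $y$ is fresh we have $y \notin \FV(\chi)$, so \textsc{Existential Instantiation} yields $\Gamma \proves_{\rn{I}} \exists y\, \phi[x \mapsto y] \to \chi$; and since $y$ is fresh for $\phi$, the formula $\exists y\, \phi[x \mapsto y]$ is the bound-variable renaming of $\exists x \phi$, which we identify with it. Combining this with the first hypothesis $\Gamma \proves_{\rn{I}} \exists x \phi$ via \textsc{Modus Ponens} gives $\Gamma \proves_{\rn{I}} \chi$.

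The main obstacle is the substitution lemma together with the associated $\alpha$-renaming bookkeeping: one must verify that replacing the fresh closed term by a fresh variable neither captures variables under a quantifier nor invalidates the freshness side conditions attached to \textsc{Generalization} and \textsc{Existential Instantiation}. Once this lemma is established, the remaining steps — Deduction Theorem, Existential Instantiation, and Modus Ponens — follow immediately.
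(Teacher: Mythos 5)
Your overall skeleton---Deduction Theorem, trade the fresh closed term for a fresh variable, then \textsc{Existential Instantiation} and \textsc{Modus Ponens}---is exactly the standard argument, and it is in fact all the paper itself offers (the paper states this proposition without proof, as an ``immediate corollary'' of Proposition~\ref{prop:deduction-theorem}). The genuine gap is that your substitution lemma is false as stated. It holds when $t$ is a \emph{constant}, but not for an arbitrary $t \in \setClosedTerm$: occurrences of a compound closed term can be \emph{created} by the substitution construct $\mathfrak{s}_x^s$ inside an axiom scheme, straddling the boundary between the instantiated formula and the substituted term, so ``composing the instantiation with $[t \mapsto y]$'' does not return an axiom instance. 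Concretely, take $t = f(a)$. The formula $\forall z\, P(f(z)) \to P(f(a))$ is an instance of $(\ern\forall)$ (instantiate $X$ as $P(f(z))$ and substitute the term $a$ for $z$). Replacing every occurrence of the closed term $f(a)$ by a fresh variable $y$ yields $\forall z\, P(f(z)) \to P(y)$, which is not an axiom instance and is in fact not derivable at all (it is not even classically valid). So the induction breaks at the \textsc{Axiom} case, and not reparably: the transformed formula is simply underivable.

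This is not mere bookkeeping you could push through with more care, because for compound $t$ the proposition itself fails under the literal reading ``$t$ does not occur as a subterm of $\Gamma$, $\phi$, $\chi$''. Take $\Gamma = \{\exists x P(x),\ \forall z(P(f(z)) \to Q)\}$, $\phi = P(x)$, $\chi = Q$, and $t = f(a)$ with $a$ fresh: the term $f(a)$ occurs nowhere in $\Gamma$, $\phi$, $\chi$; both hypotheses hold (in particular $P(f(a)), \Gamma \proves_{\rn{I}} Q$ via $(\ern\forall)$ and \textsc{Modus Ponens}); yet $\Gamma \not\proves_{\rn{I}} Q$, as witnessed classically by the model with domain $\{0,1\}$, $P = \{0\}$, $f$ constantly $1$, and $Q$ false. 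Your argument is correct verbatim when $t$ is a constant (this is the classical ``theorem on constants''), and the proposition becomes true for compound $t$ if the freshness hypothesis is strengthened to say that no function symbol or constant occurring in $t$ occurs in $\Gamma$, $\phi$, or $\chi$---but even then the naive derivation-by-derivation replacement fails, since a derivation may pass through axiom instances like the one above. So: right strategy, but you must either restrict $t$ to be atomic or strengthen the freshness condition, and say so explicitly; as written, the key lemma is false.
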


This completes the summary of (intuitionistic and classical) first-order logic.

\section{Base-extension Semantics} \label{sec:bes}

\subsection{Atomic System}

We begin with the notion of an \emph{atomic system}. It may be thought of as an agent's commitments regarding the inferential connexions between some atomic sentences. These are pre-logical in the sense that they do not contain any logical signs. This is why $\bot$ is not `atomic' in this paper. For example, Aristotle may believe the fact
\[
\infer{\text{Socrates is human}}{}
\]
and the inference 
\[
\infer{\text{Socrates is mortal}}{\text{Socrates is human}}
\]
What we mean by inference here is that support of the assertion of the premisses gives support for the assertion of the conclusion. For example, Aristotle believes that the assertion ``Socrates is mortal" is supported, even though it does not appear explicitly in the position, for it is deduced from his inferential commitments. 

A position in this sense is expressed mathematically as an \emph{atomic system}, defined as a collection of \emph{atomic rules} that represent facts and inferential relationships. The commitments relative to an atomic system are captured through a derivability judgment.

\begin{definition}[Atomic Rule] \label{def:atomic-rule}
    An atomic rule is a pair $\{\atset{P}_1 \Rightarrow P_1, \ldots \atset{P}_n \Rightarrow P_n\} \Rightarrow P$ in which $\atset{P}_1, \ldots,\atset{P}_1 \subseteq \setClosedAtom$ and $P_1,...,P_n \in \setClosedAtom$. An atomic rule is zero-level if $n=0$, it is first-level if $n \neq 0$ but $\atset{P}_1,\ldots,\atset{P}_n=\emptyset$, and it is second-level otherwise. 
\end{definition} 

We may drop braces and other notation for readability. 

\begin{definition}[Atomic System] \label{def:atomic-system}
    An atomic system $\base{S}$ is a set of atomic rules. 
\end{definition}

\begin{definition}[Derivability in an Atomic System] \label{def:der-base}  Let $\base{S}$ be an atomic system. Derivability in $\base{S}$ is defined inductively as follows:
    \begin{itemize}
        \item \textsc{ref}. If $P \in \atset{P}$, then $\atset{P} \proves_{\base{S}} P$
        \item \textsc{app}. If $\{\atset{P}_1 \Rightarrow P_1, \ldots \atset{P}_n \Rightarrow P_n\} \Rightarrow P \in \base{S}$ and $\atset{P}, \atset{P}_i \proves_{\base{S}} P_i$ for $i=1,\ldots,n$, then $\atset{P} \proves_{\base{S}} P$.
    \end{itemize}
\end{definition}
Thus, Aristotle's position above may be represented symbolically by an atomic system $\base{A}$ containing the rules
\[
\Rightarrow H(s) \qquad \text{and} \qquad  H(s) \Rightarrow M(s)
\]
Moreover, based on this belief set, Aristotle is committed to believing $M(s)$, since we can see that
$\proves_{\base{A}} M(s)$.

This notation suggests a relationship to natural deduction in the sense of Gentzen~\cite{Gentzen}:
\[
\{\atset{P}_1 \Rightarrow P_1, \ldots \atset{P}_n \Rightarrow P_n\} \Rightarrow P \qquad \text{may be expressed as} \qquad \infer{P}{\deduce{P_1}{[\atset{P}_1]} & \ldots & \deduce{P_1}{[\atset{P}_n]}}
\]
That this reading is correct follows from Definition~\ref{def:der-base}. However, since there is no substitution in the application of these rules, they are perhaps more closely related to hereditary Harrop formulae than natural deduction rule figures --- see Gheorghiu and Pym~\cite{NAF} for further details.

Working in FOL means that we have a choice about whether to include variables  in the notion of an atomic system. This would allow us to naturally encode a position such as whoever is human is also mortal,
\[
\infer{\text{$x$ is mortal}}{\text{$x$ is human}}
\]
However, we intend for positions to capture the inferential relationships between complete thoughts, so we restrict ourselves to atomic \emph{sentences} --- that is, closed atoms. Thus, we prefer to represent the above position by including every instance of
\[
\infer{\text{$t$ is mortal}}{\text{$t$ is human}}
\]
as $t$ ranges over all names. Mathematically, either approach is possible, but we believe this choice better aligns with our intended interpretation. It is easy to see that the two formalisms have the same expressive power.

\begin{definition}[Open Atomic Rule] \label{def:open-base}
     An open atomic rule is a pair $\{\atset{P}_1 \Rightarrow P_1, \ldots \atset{P}_n \Rightarrow P_n\} \Rightarrow P$ in which $\atset{P}_1, \ldots,\atset{P}_1 \subseteq \setAtoms$ and $P_1,...,P_n \in \setAtoms$.
\end{definition} 

The nomenclature is slightly off since an atomic rule containing \textit{no variables} is still considered an \emph{open} atomic rule under this definition.  If we were to engage with them more seriously, we would choose a better term (perhaps begin with these as atomic systems and refer to the objects defined in Definition~\ref{def:atomic-rule} as \emph{closed} atomic systems), but this one will suit the short discussion herein. 

The notions of zeroth-, first- and second-level open atomic rule is analogous to the case in Definition~\ref{def:atomic-rule}. The same goes for the notion of an open atomic system (cf. Definition~\ref{def:atomic-system}) and derivability in an open atomic system (cf. Definition~\ref{def:der-base}).  We write  $\vdash_{\base{A}}' A$ to denote that a (possibly open) atom $A$ is derived in the open atomic system $\base{A}$. 

Given an open atomic system $\base{A}$, we may write $\bar{\base{A}}$ to denote its \textit{closure}. That is, let a \textit{total unifier} be an injection $\theta:\setVar \to \setClosedTerm$ that acts as a term substation in the sense given as syntactic structure $S$ (e.g., a term, an atom, a formula, or a set of formulae, etc), we may write $P\theta$ to denote the result of replacing all free variables by their image under $\theta$.  Then, given a rule $r \in \base{A}$, its closure is all its set of instances:
\[
\bar{r} := \{ \atset{P}_1\theta \Rightarrow P_1\theta, \ldots \atset{P}_n\theta \Rightarrow P_n\theta\} \Rightarrow P\theta \mid \text{total unifiers $\theta$} \}
\]
The closure of $\base{A}$ is then formally the union of all these sets:
\[
\bar{\base{A}} := \bigcup_{r \in \base{A}} \bar{r}
\]

Under this closure operation, we see that open atomic systems actually do not add any expressiveness:

\begin{proposition} 
For any open atomic system $\base{A}$ and any atom $P$, there is a total unifer $\theta$ such that
\[
\vdash_{\base{A}}' P \qquad \mbox{iff} \qquad \supp_{\bar{\base{A}}} P\theta
\]
\end{proposition}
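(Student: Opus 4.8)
The plan is to prove the biconditional by establishing its two halves separately, each by induction on the height of a derivation, after fixing $\theta$ with some care. Concretely, I would take $\theta$ to be an injective total unifier --- one exists since $\setClosedTerm$ is denumerable --- whose image consists of closed terms not occurring in $\base{A}$ or $P$. Injectivity means $\theta$ admits a partial inverse $\theta^{-1}$ on its image, and the freshness of that image means that reading a closed term $x\theta$ back as the variable $x$ is both well-defined and unambiguous. Throughout I use that, for an atom, $\supp_{\bar{\base{A}}}$ is just closed derivability $\proves_{\bar{\base{A}}}$, and I write $\bar{\base{A}}$ for the closure defined above.

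For the forward direction, assume $\proves_{\base{A}}' P$ with witnessing open derivation $\mathcal{D}$, and apply $\theta$ uniformly to every judgement in $\mathcal{D}$. An instance of \textsc{ref} goes to an instance of \textsc{ref}, since $P \in \atset{P}$ gives $P\theta \in \atset{P}\theta$; an instance of \textsc{app} using an open rule $r \in \base{A}$ goes to an instance of \textsc{app} using $r\theta$, which lies in $\bar{\base{A}}$ by the definition of closure. The result is a closed derivation of $P\theta$ in $\bar{\base{A}}$, giving $\supp_{\bar{\base{A}}} P\theta$. I note that this half works for every total unifier, and does not need the freshness of $\theta$.

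The backward direction is where I expect the real difficulty. Assume $\supp_{\bar{\base{A}}} P\theta$, that is $\proves_{\bar{\base{A}}} P\theta$, with witnessing closed derivation $\mathcal{E}$, and try to reverse the construction by applying $\theta^{-1}$ to $\mathcal{E}$. The obstacle is that an \textsc{app} step of $\mathcal{E}$ invokes a closed rule $r\sigma$ for some $r \in \base{A}$ and some total unifier $\sigma$ that need not coincide with $\theta$; since Definition~\ref{def:der-base} forbids substitution in rule application, lifting this step must recover precisely the open rule $r$ rather than some incompatible instance of it. I would meet this by strengthening the induction hypothesis: I prove that every closed atom occurring in $\mathcal{E}$ has the form $Q\theta$ for an open atom $Q$ already satisfying $\proves_{\base{A}}' Q$. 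The freshness of the image of $\theta$ forces the closed terms introduced at the root to propagate coherently up the derivation and prevents them from being confused with terms already named in $\base{A}$, so that $\theta^{-1}$ is defined all along $\mathcal{E}$ and each rule application descends to a unique open rule.

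Thus the only genuinely delicate point is the \textsc{app} case of this backward induction, where one verifies that the premise instantiations recorded in $\mathcal{E}$ are --- by injectivity and freshness of $\theta$ --- uniform enough to collapse to a single open rule applied without substitution; the \textsc{ref} cases, together with the whole forward direction, are then routine.
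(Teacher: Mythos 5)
Your forward direction coincides with the paper's (push $\theta$ through the open derivation, turning each rule $r \in \base{A}$ into its instance $r\theta \in \bar{\base{A}}$), and your diagnosis of the backward direction --- that an \textsc{app} step of $\mathcal{E}$ may invoke an instance $r\sigma$ with $\sigma \neq \theta$ --- is precisely the point that the paper's own two-line proof glosses over when it says ``apply $\theta^{-1}$''. But your repair does not close the gap. The strengthened induction hypothesis (every closed atom occurring in $\mathcal{E}$ is $Q\theta$ for some open atom $Q$ with $\vdash_{\base{A}}' Q$) is not an invariant of derivations in $\bar{\base{A}}$, and no freshness condition on $\theta$ can make it one: freshness constrains the endpoint $P\theta$, but it cannot constrain which instances a derivation in $\bar{\base{A}}$ uses, because $\bar{\base{A}}$ contains \emph{every} instance $r\sigma$ by definition. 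Concretely, take $\base{A} = \{\Rightarrow Q(x)\}$ and $P = Q(y)$ with $y \neq x$. For \emph{any} total unifier $\theta$ we have $\supp_{\bar{\base{A}}} P\theta$, since $\Rightarrow Q(t) \in \bar{\base{A}}$ for every $t \in \setClosedTerm$, in particular for $t = y\theta$; but the rule instance used at the root arises from a unifier $\sigma$ with $x\sigma = y\theta$, which is not a $\theta$-instance (injectivity of $\theta$ gives $x\theta \neq y\theta$), the root atom $Q(y\theta)$ is not of the form $Q'\theta$ for any open-derivable $Q'$, and applying $\theta^{-1}$ to the instance yields $\Rightarrow Q(y)$, which is not a rule of $\base{A}$. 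So the \textsc{app} case of your backward induction cannot be completed. Intermediate atoms cause the same failure even when both sides of the biconditional are true: with $\base{A} = \{\Rightarrow R(x),\ \{R(x)\} \Rightarrow S\}$ and $S$ a closed atom, a derivation of $S$ in $\bar{\base{A}}$ may pass through $R(t_0)$ for a closed term $t_0$ outside the image of $\theta$, where $\theta^{-1}$ is simply undefined.

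The example in fact shows something stronger: under the reading of open derivability the paper fixes (rules applied without substitution, ``analogous to'' Definition~\ref{def:der-base}), we have $\vdash_{\base{A}}' Q(y)$ failing while $\supp_{\bar{\base{A}}} Q(y)\theta$ holds for every $\theta$, so the right-to-left implication fails for \emph{every} choice of total unifier; the paper's own appeal to $\theta^{-1}$ suffers exactly the defect you identified, it just does not acknowledge it. What is needed is therefore not a cleverer choice of $\theta$ but a change of setup: either open rules must be applicable under instantiation (so that, e.g., $Q(y)$ becomes open-derivable, matching the informal ``whoever is human is mortal'' reading), or the proposition must be weakened to the left-to-right direction --- which is the only half that your argument, or the paper's, actually establishes.
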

\begin{proof}
   By definition of $\bar{\base{A}}$, applying $\theta$ to any sequence of steps (i.e., unfolding of its inductive definition) witnessing $\vdash_{\base{A}}' P$ yields a sequence of steps witnessing $\supp_{\bar{\base{A}}} \bar{P}\theta$. 
    
    Conversely, let $\theta^{-1}$ be the left-inverse of $\theta$. Applying $\theta^{-1}$ to any sequence of steps witnessing $\supp_{\bar{\base{A}}} \bar{P}\theta$ yields a sequence of steps $\vdash_{\base{A}}' P$. 
\end{proof}

\subsection{Support}

We use atomic systems to ground the semantic judgment of \emph{support} ($\supp$) that defines B-eS. We may not necessarily accept all forms of atomic systems. Different conceptions of atomic systems correspond to different logics.  Accordingly, we introduce the idea of a \emph{basis}, which specifies the kinds of atomic systems we work with. Given a fixed basis $\basis{B}$, its elements are called \emph{bases} $\base{B}$. Once a basis is fixed, we always work relative to its bases. Therefore we introduce the notion of \emph{base-extension}, a restricted version of superset (or ``extension") of an atomic system that respects the given basis.

\begin{definition}[Basis]
    A basis $\basis{B}$ is a set of atomic systems.
\end{definition}

\begin{definition}[Base-extension]
    Given a basis $\basis{B}$, base-extension is the least relation satisfying the following:
    \[
    \base{Y} \basesup{\basis{B}} \base{X} \qquad \mbox{iff} \qquad \base{X}, \base{Y} \in \basis{B} \mbox{ and } \base{Y} \supseteq \base{X}
    \]
\end{definition}

Our semantics for the logical constants will be paramterized by the notion of basis. In particular, we consider the following, where $\base{A}$ is an arbitrary atomic systems, 
\begin{itemize}
\item $\basis{C} := \{\base{A} \mid \mbox{$\base{A}$ is zero- or first-level} \}$  
\item $\basis{I} := \{\base{A} \mid \mbox{$\base{A}$ is zero-, first- or second-level} \}$
\end{itemize}
Of course, $\basis{C}$ will correspond to CL and $\basis{I}$ to IL. We understand this correspondence mathematically, but as to why it should be this way remains opaque.

The meaning of logical signs is given by clauses relative to a basis that collectively define a semantic judgment called \emph{support}.

\begin{definition}[Support] \label{def:supp}
    Let $\basis{B}$ be a basis. Support is the smallest relation $\supp$ defined by the clauses of Figure~\ref{fig:support} in which: $\base{B} \in \basis{B}$, all formulae are closed, $\Delta$ is a non-empty set of closed formulae, and $\Gamma$ is a finite (possibly empty) set of formulae. 
\end{definition}

\begin{figure}[t]
\hrule 
\vspace{2mm}
 \[
        \begin{array}{l@{\quad}c@{\quad}l@{\quad}r}
            \supp_{\base{B}} P  & \mbox{iff} &   \proves_{\base{B}} P & \mbox{(At)}  \\[1mm]
            \supp_{\base{B}} \phi \to \psi & \mbox{iff} & \phi \supp_{\base{B}} \psi & (\to) \\[1mm]
            \supp_{\base{B}} \phi \land \psi \hspace{3mm} & \mbox{iff} &    \supp_{\base{B}} \phi \text{ and  } \supp_{\base{B}} \psi& (\land) \\[1mm]
             \supp_{\base{B}} \forall x \phi & \mbox{iff} & \mbox{$\supp_{\base{B}} \phi[x \mapsto t]$ for any $t \in \setClosedTerm$}  & (\forall) \\[1mm]
                   \supp_{\base{B}} \bot & \mbox{iff} &    \supp_{\base{B}} P \text{ for any } P \in \setClosedAtom & (\bot) \\[1mm]
            \supp_{\base{B}} \phi \lor \psi & \mbox{iff} & \mbox{for any $\base{C} \basesup{\mathfrak{B}}\base{B}$ and $P \in \setClosedAtom$,} & \\ & & \mbox{if $\phi \supp_{\base{C}} P$ and $\psi \supp_{\base{C}} P$, then $\supp_{\base{C}} P$}  & (\lor) \\[1mm]
  \supp_{\base{B}} \exists x \phi & \mbox{iff} &  \mbox{for any $\base{C} \basesup{\mathfrak{B}} \base{B}$ and $P \in \setClosedAtom$,} & \\ & & \mbox{if $\phi[x \mapsto t] \supp_{\base{C}} P$ for any $t \in \setClosedTerm$, then $\supp_{\base{C}} P$} & (\exists)  \\[1mm]
   \hspace{-4mm} \Delta \supp_{\base{B}} \phi & \mbox{iff} & \mbox{$\forall \base{C} \basesup{\mathfrak{B}} \baseB$, if $\supp_{\base{C}} \psi$ for any $\psi \in \Delta$, then $\supp_{\base{C}} \phi$ } &  \mbox{(Inf)} \\[1mm]
              \hspace{-1em} \Gamma \supp \phi & \mbox{iff} & \mbox{$\Gamma \supp_{\base{B}} \phi$ for any $\base{B} \in \basis{B}$}
        \end{array}
        \]
    \hrule
    \caption{Base-extension Semantics for First-order Logic} 
    \label{fig:support}
\end{figure}

This definition merits some remarks. Firstly, it is an inductive definition, but the induction measure is not the size of the formula $\phi$. Instead, it is a measure of the ``logical weight" $w$ of $\phi$. Extending Sandqvist~\cite{Sandqvist2015IL},
\[
w(\phi) := 
\begin{cases}
    0 & \mbox{if $\phi \in \setAtoms$} \\
    1 & \mbox{if $\phi = \bot$} \\
   w(\phi_1)+w(\phi_2)+1 & \mbox{if $\phi = \phi_1 \circ \phi_2$ for any $\circ \in \{\to, \land, \lor\}$} \\
   w(\psi) + 1 & \mbox{if $\phi = Qx\psi$ for any $Q \in \{\forall, \exists\}$ and $x \in \setVar$}
\end{cases}
\]
In each clauses of Figure~\ref{fig:support}, the sum of the weights of the complex formulas
flanking the support judgment in the definiendum exceeds the corresponding number for any occurrence of the judgment in the definiens. We refer to induction relative to this measure as \emph{semantic} induction to distinguish it from \emph{structural} induction.

Secondly, the base-extension $\base{C} \supseteq \base{B}$ appears to recall the change of world condition familiar from Kripke's semantics for intuitionistic logic~\cite{kripke1965semantical}. Its origins here comes from an intuitive reading of implication that does have a intuitionistic quality to it. Intuitively, we expect a $\base{B}$ to support an implication $\phi \to \psi$ if whenever it supports $\base{B}$ then also supports $\psi$. However, such a condition would be satisfied vacuously if $\base{B}$ did not support $\phi$. Therefore, we consider arbitrary extension $\base{C} \supseteq \base{B}$ in which $\phi$ is supported and check that under such commitments $\psi$ is also supported.

Thirdly, this semantics assumes a \emph{static} ontology. The M-tS for intuitionistic logic introduced by Kripke~\cite{kripke1965semantical} is based on a quasi-ordered set of models whose domains may expand along the ordering. That is, it has an ontology that grows as knowledge increases. Grzegorczyk~\cite{grzegorczyk1964interpretation} proposed an alternative with a constant domain, where the ontology remains fixed across all stages. But G\"ornemann~\cite{gornemann1971logic} (see also Klemke~\cite{klemke1971henkin} and Gabbay~\cite{gabbay1969montague}) demonstrated that this yields an intermediate logic extending intuitionistic logic $\mathsf{I}$ with the additional axiom:
\[
\forall x(\mathfrak{s} \lor \mathfrak{t}) \to (\mathfrak{s} \lor \forall x\mathfrak{t}),
\]
where $x$ is not free in $\mathfrak{s}$. By contrast, in B-eS there is no need for the domain to change with the base. This makes things comparatively simple. 

We assign semantics to sentences --- that is, closed formulae --- based on the view that meaning is a property of complete assertions. Nonetheless, we may extend support to all well-formed formulas (wffs) by considering the universal closure of formulas with free variables. That is, let $\phi$ and let $\FV(\phi) = \{x_1,\ldots,x_n\}$, 
\[
\uniclose{\phi} := \forall x_1,\ldots,x_n \phi
\]
To make this deterministic, we may choose that $x_1,\ldots,x_n$ are the free variables in $\phi$ as it is read right to left. Having established such a closure, we define the support of open formulae as follows:
\[
\supp_{\base{B}} \phi \qquad \mbox{iff} \qquad \supp_{\baseB}  \uniclose{\phi} \tag*{(\textsc{wff})} \label{cl:wff}
\]

To conclude this section, we observe some important results about semantics. They are proved by Sandqvist~\cite{Sandqvist2015IL}. The following hold with respect to either  $\basis{C}$ or $\basis{J}$ --- we write $\basis{B}$ as a parameter. 

\begin{proposition}[Monotonicity] \label{prop:monotonicity}
    If $\supp_{\base{B}} \phi$ and $\base{C} \basesup{\mathfrak{B}} \base{B}$, then $\supp_{\base{C}} \phi$.
\end{proposition}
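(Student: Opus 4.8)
The plan is to argue by \emph{semantic} induction on the logical weight $w(\phi)$, exactly the induction principle flagged after Definition~\ref{def:supp}; throughout I fix bases $\base{B}, \base{C}$ with $\base{C} \basesup{\basis{B}} \base{B}$ and assume $\supp_{\base{B}} \phi$. Before the main argument I would record two auxiliary facts. The first is a \emph{weakening} lemma for atomic derivability: if $\base{C} \supseteq \base{B}$ then $\proves_{\base{B}} P$ implies $\proves_{\base{C}} P$. This follows by a routine induction on the derivation witnessing $\proves_{\base{B}} P$ (Definition~\ref{def:der-base}): the \textsc{ref} step is independent of the system, and every \textsc{app} step invokes a rule of $\base{B}$, which is also a rule of $\base{C}$ since $\base{C} \supseteq \base{B}$. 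The second is \emph{transitivity} of base-extension: if $\base{D} \basesup{\basis{B}} \base{C}$ and $\base{C} \basesup{\basis{B}} \base{B}$ then $\base{D} \basesup{\basis{B}} \base{B}$, which is immediate from $\base{D} \supseteq \base{C} \supseteq \base{B}$ together with $\base{D}, \base{B} \in \basis{B}$.

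The inductive cases split into two families. The \emph{local} connectives --- atoms, $\bot$, $\land$, and $\forall$ --- are handled using the induction hypothesis and the weakening lemma. For an atom $P$, clause $(\mathrm{At})$ reduces $\supp_{\base{B}} P$ to $\proves_{\base{B}} P$, and weakening delivers $\proves_{\base{C}} P$, i.e.\ $\supp_{\base{C}} P$. For $\bot$, clause $(\bot)$ gives $\supp_{\base{B}} P$ for every $P \in \setClosedAtom$; applying the atomic case to each yields $\supp_{\base{C}} P$ for every such $P$, hence $\supp_{\base{C}} \bot$. For $\phi = \psi \land \chi$ and $\phi = \forall x \psi$ the definiens refers to support at the \emph{same} base $\base{B}$ of strictly lighter formulae (the conjuncts $\psi, \chi$, respectively each instance $\psi[x \mapsto t]$ with $t \in \setClosedTerm$, noting $w(\psi[x \mapsto t]) = w(\psi) < w(\phi)$ since substituting a closed term introduces no connective or quantifier), so the induction hypothesis transfers each conjunct or instance to $\base{C}$ and the clause reassembles $\supp_{\base{C}} \phi$.

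The \emph{extension-quantifying} connectives --- $\to$, $\lor$, and $\exists$ --- require no appeal to the induction hypothesis: their clauses already universally quantify over all $\base{D} \basesup{\basis{B}} \base{B}$, and monotonicity drops straight out of transitivity. For instance, $\supp_{\base{B}} \psi \to \chi$ unfolds via $(\to)$ and $(\mathrm{Inf})$ to the statement that for every $\base{D} \basesup{\basis{B}} \base{B}$ with $\supp_{\base{D}} \psi$ we have $\supp_{\base{D}} \chi$; since every $\base{D} \basesup{\basis{B}} \base{C}$ also satisfies $\base{D} \basesup{\basis{B}} \base{B}$ by transitivity, the same condition holds with $\base{C}$ in place of $\base{B}$, which is exactly $\supp_{\base{C}} \psi \to \chi$. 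The $\lor$ and $\exists$ cases are verbatim the same: their conditions range over all $\base{D} \basesup{\basis{B}} \base{B}$, and restricting this quantifier to the sub-collection of $\base{D} \basesup{\basis{B}} \base{C}$ preserves their truth.

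There is no deep obstacle here; the proposition is essentially bookkeeping. The only points demanding care are (i) getting the induction measure right --- it must be logical weight $w$ rather than formula size, so that the $\forall$-case is licensed to apply the hypothesis to the infinitely many closed instances $\psi[x \mapsto t]$ --- and (ii) noticing the clean dichotomy between the local clauses, which need the induction hypothesis, and the extension-quantifying clauses, which need only transitivity of $\basesup{\basis{B}}$. Both observations are uniform across $\basis{C}$ and $\basis{I}$, so the single argument establishes the result for both bases as the parameterisation by $\basis{B}$ intends.
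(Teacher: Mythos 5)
Your proof is correct, and it is essentially the standard argument: the paper itself states Proposition~\ref{prop:monotonicity} without proof, deferring to Sandqvist~\cite{Sandqvist2015IL}, whose proof proceeds exactly as yours does --- semantic induction on weight, with the local clauses ($\mathrm{At}$, $\bot$, $\land$, $\forall$) handled via the induction hypothesis and atomic weakening, and the extension-quantifying clauses ($\to$, $\lor$, $\exists$) handled by transitivity of $\basesup{\basis{B}}$ alone. Your explicit isolation of the two auxiliary facts and the dichotomy between the two families of clauses is a faithful reconstruction of that argument, uniform in the basis parameter as required.
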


\begin{proposition} \label{prop:CPL:atomic-cut}
    For any $\atset{Q}, \atset{P} \subseteq \setClosedAtom$ (finite) and $P \in \setClosedAtom$ and base $\base{B}$,
    \[
    \atset{Q}, \atset{P} \proves_{\base{B}} P \qquad \mbox{iff} \qquad \text{$\forall \baseX \basesup{\mathfrak{B}} \baseB$, if $\proves_{\baseX} Q$ for $Q \in \atset{Q}$, then $\atset{P}\proves_{\baseX} P$}
    \]
\end{proposition}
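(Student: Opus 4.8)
The plan is to treat this as a purely proof-theoretic statement about the atomic derivability relation $\proves_{\base{B}}$ of Definition~\ref{def:der-base}, isolating three auxiliary facts about it, each provable by a routine induction on derivations. First, \emph{weakening}: if $\atset{P} \proves_{\base{S}} P$ and $\atset{P} \subseteq \atset{P}'$, then $\atset{P}' \proves_{\base{S}} P$. Second, \emph{base monotonicity}: if $\atset{P} \proves_{\base{S}} P$ and $\base{S} \subseteq \base{S}'$, then $\atset{P} \proves_{\base{S}'} P$. Third, \emph{cut}: if $\atset{P}, \atset{Q} \proves_{\base{S}} P$ and $\atset{P} \proves_{\base{S}} Q$ for every $Q \in \atset{Q}$, then $\atset{P} \proves_{\base{S}} P$. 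Each follows by induction on the given derivation, with the \textsc{app} case using weakening to push the side-derivations under the enlarged hypothesis set that the discharge mechanism introduces.

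With these in hand, the left-to-right direction is short. Suppose $\atset{Q}, \atset{P} \proves_{\base{B}} P$, and fix any $\baseX \basesup{\mathfrak{B}} \base{B}$ with $\proves_{\baseX} Q$ for each $Q \in \atset{Q}$. Base monotonicity gives $\atset{Q}, \atset{P} \proves_{\baseX} P$; weakening upgrades each $\proves_{\baseX} Q$ to $\atset{P} \proves_{\baseX} Q$; and cut then eliminates the hypotheses $\atset{Q}$, yielding $\atset{P} \proves_{\baseX} P$, as required.

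For the right-to-left direction, the idea is to internalise the atoms of $\atset{Q}$ as facts in a tailored extension of $\base{B}$. I would set $\baseX := \base{B} \cup \{\, \Rightarrow Q \mid Q \in \atset{Q}\,\}$. Since only zero-level rules are added, $\baseX$ remains within whichever basis we work over (both $\basis{C}$ and $\basis{I}$ permit zero-level rules), so $\baseX \basesup{\mathfrak{B}} \base{B}$; and evidently $\proves_{\baseX} Q$ for every $Q \in \atset{Q}$ via the new rules. The hypothesis of the proposition then yields $\atset{P} \proves_{\baseX} P$. It remains to transfer this back to $\base{B}$, trading the added facts for hypotheses, i.e. to prove the \emph{deduction-style claim} that $\atset{P} \proves_{\baseX} P$ implies $\atset{Q}, \atset{P} \proves_{\base{B}} P$. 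This I would show by induction on the derivation of $\atset{P} \proves_{\baseX} P$: a \textsc{ref} step is immediate; an \textsc{app} step using a rule of $\base{B}$ is reproduced after applying the induction hypothesis to its premises; and an \textsc{app} step using an added rule $\Rightarrow Q$ forces the conclusion to be some $Q \in \atset{Q}$, which is obtained by \textsc{ref} from $\atset{Q}, \atset{P}$.

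I expect the main obstacle to be the cut lemma, and more precisely its \textsc{app} case: when the last rule discharges assumptions $\atset{R}_i$, the subderivations live in the context $\atset{P}, \atset{Q}, \atset{R}_i$, so applying the induction hypothesis requires first weakening the side-derivations $\atset{P} \proves_{\base{S}} Q$ to $\atset{P}, \atset{R}_i \proves_{\base{S}} Q$. Keeping the bookkeeping of these enlarged hypothesis sets correct is the only delicate point; everything else is induction without surprises. The same enlarged-context phenomenon reappears in the \textsc{app} case of the deduction-style claim, so proving weakening first and invoking it freely streamlines both arguments.
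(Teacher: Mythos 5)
Your proof is correct. The paper itself does not prove this proposition inline --- it defers to Sandqvist~\cite{Sandqvist2015IL} --- but your argument is essentially the same standard one used there: left-to-right by weakening, base monotonicity, and a cut lemma for atomic derivability; right-to-left by passing to the extension $\baseX = \base{B} \cup \{\Rightarrow Q \mid Q \in \atset{Q}\}$ (which stays inside either basis, since both $\basis{C}$ and $\basis{I}$ admit zero-level rules) and then translating derivations back to $\base{B}$ by trading the added axioms for hypotheses, with the bookkeeping point you flag --- generalizing the hypothesis set in the inductive statements of cut and of the deduction-style claim so the \textsc{app} case goes through --- being exactly the right one.
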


This completes the presentation of the B-eS for FOL. It remains to prove soundness and completeness for classical and intuitionistic logic.

\section{Soundness} \label{sec:soundness}

The soundness of an axiomatization $\mathsf{A}$ with respect to a semantics is the property that only valid consequences may be derived --- that is, if $\Gamma \vdash_{\mathsf{A}} \phi$, then $\Gamma \supp \phi$.
We follow the standard approach and show that the semantics judgment $(\supp)$ satisfies all the axioms and all the rules of derivation.

\begin{lemma}[Axiom] \label{lem:axiom}
    If $A \in \setClassical$ (resp. $A \in \setIntuitionistic$) and $\basis{B}=\basis{C}$ (resp. $\basis{B} = \basis{I}$), then $\supp \sigma A$ for any substitution $\sigma$. 
\end{lemma}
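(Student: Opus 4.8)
The plan is to verify, for each scheme $A$ and each instantiation $\sigma$, that the (closed) formula $\sigma A$ is supported in every base of the relevant basis. First I would reduce to closed instances: by the \ref{cl:wff} clause $\supp \sigma A$ amounts to $\supp \uniclose{(\sigma A)}$, and unfolding the $(\forall)$ clause shows this holds iff every closing term-instance $(\sigma A)[\vec{x}\mapsto\vec{t}]$ is supported in every base; since each such instance is again a closed instance of $A$ under a closing instantiation, it suffices to treat closed $\sigma A$. The bulk of the schemes --- $\textsc{K}$, $\textsc{S}$, $\irn\land$, $\ern\land_1$, $\ern\land_2$, $\irn\lor_1$, $\irn\lor_2$, $\ern\forall$, $\irn\exists$ and $\irn\neg$ --- then fall to a mechanical unfolding of the $(\to)$, $(\land)$, $(\forall)$, $(\lor)$ and $(\exists)$ clauses together with repeated appeals to Monotonicity (Proposition~\ref{prop:monotonicity}): in each case one assumes the antecedents supported in successive extensions $\base{C}\basesup{\basis{B}}\base{B}$, pushes the available judgements forward along the extensions by monotonicity, and reads off the consequent. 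For instance $\ern\forall$ uses only that $\supp_{\base{C}}\forall x\phi$ yields $\supp_{\base{C}}\phi[x\mapsto t]$ via the $(\forall)$ clause, and $\irn\exists$ uses the $(\exists)$ clause at the witnessing term.

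Two schemes need auxiliary semantic lemmas. For $\efq$, i.e.\ $(\phi\to\bot)\to(\phi\to\psi)$, the difficulty is passing from $\supp_{\base{D}}\bot$ to $\supp_{\base{D}}\psi$ for an \emph{arbitrary} $\psi$; I would isolate a lemma stating that $\supp_{\base{B}}\bot$ implies $\supp_{\base{B}}\chi$ for every closed $\chi$, proved by semantic induction on the weight $w(\chi)$ (the atomic case is the $(\bot)$ clause, and each connective or quantifier case is immediate from Monotonicity and the induction hypothesis). For $\ern\lor$, i.e.\ $(\phi\to\chi)\to((\psi\to\chi)\to((\phi\lor\psi)\to\chi))$, the $(\lor)$ clause only delivers the conclusion for \emph{atomic} $P$, whereas $\chi$ is arbitrary; here I would prove the promised generalisation of the disjunction clause --- if $\supp_{\base{B}}\phi\lor\psi$, then for all $\base{C}\basesup{\basis{B}}\base{B}$ and all closed $\chi$, $\phi\supp_{\base{C}}\chi$ and $\psi\supp_{\base{C}}\chi$ imply $\supp_{\base{C}}\chi$ --- again by semantic induction on $w(\chi)$, the atomic case being the clause itself and the remaining cases threading the two hypotheses through the appropriate clauses and Monotonicity. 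With this in hand $\ern\lor$ is immediate. Both lemmas hold over $\basis{C}$ and $\basis{I}$ alike, since neither uses the level restriction.

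The main obstacle is $\dne$, $\neg\neg\phi\to\phi$, the only scheme that must genuinely exploit the restriction to $\basis{C}$. I would first settle the atomic case: assuming $\supp_{\base{B}}\neg\neg P$ and $\not\proves_{\base{B}} P$, form the first-level extension $\base{B}':=\base{B}\cup\{P\Rightarrow Q\mid Q\in\setClosedAtom\}$, which lies in $\basis{C}$ and renders $P$ explosive, so that $\supp_{\base{B}'}\neg P$; applying the hypothesis at $\base{B}'$ yields $\supp_{\base{B}'}\bot$ and hence $\proves_{\base{B}'}P$. The contradiction comes from a derivability-preservation lemma: for a \emph{first-level} base, $\proves_{\base{B}'}A$ implies $\proves_{\base{B}}A$ for every atom $A$, proved by induction on the height of the $\base{B}'$-derivation --- any use of a new rule $P\Rightarrow A$ would demand a shorter derivation of $P$, forcing $\proves_{\base{B}}P$ against our assumption. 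This is precisely the step that fails for second-level bases (a rule such as $(P\Rightarrow S)\Rightarrow P$ can make $P$ derivable once $P$ is made explosive), which is the inferential reason $\dne$ is valid over $\basis{C}$ but not over $\basis{I}$.

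Finally I would lift to arbitrary $\phi$ by a semantic induction establishing that $\supp_{\base{B}}\neg\neg\phi$ implies $\supp_{\base{B}}\phi$: the $\bot$, $\land$, $\to$ and $\forall$ cases shift the double negation inwards using Monotonicity and the induction hypothesis, while the $\lor$ and $\exists$ cases re-run the explosive-extension argument verbatim --- now establishing $\supp_{\base{C}'}\neg(\phi_1\lor\phi_2)$, respectively $\supp_{\base{C}'}\neg\exists x\psi$, via the $(\lor)$ or $(\exists)$ clause before appealing to the assumed double negation. I expect the bookkeeping of this last induction, together with the exact formulation of the derivability-preservation lemma that underpins it, to be the delicate part of the argument; the remaining schemes are routine.
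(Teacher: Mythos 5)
Your proposal is correct and follows essentially the same route as the paper: mechanical unfolding of the clauses plus Monotonicity (Proposition~\ref{prop:monotonicity}) for the routine schemes, and a semantic induction on the weight of the arbitrary formula for $\efq$ and $\ern\lor$ --- you package these as standalone lemmas where the paper runs the same inductions inline within the respective cases, but the content is identical, and your observation that both hold over $\basis{C}$ and $\basis{I}$ alike matches the paper. The one genuine difference is $\dne$: the paper does not prove this case at all, deferring to Sandqvist~\cite{Sandqvist2009CL} with only the remark that the proof ``proceeds by induction on $\phi$ with the base case given by a contradiction that makes use of the assumption that bases are direct.'' Your reconstruction --- the explosive extension $\base{B}' = \base{B}\cup\{P\Rightarrow Q \mid Q\in\setClosedAtom\}$, the derivability-preservation lemma for first-level bases proved by induction on derivation height, and the lifting to arbitrary $\phi$ by pushing double negations inwards (with $\lor$ and $\exists$ handled by re-running the explosion argument, which works because their clauses conclude in atoms) --- is exactly the argument that remark gestures at, including the correct diagnosis that preservation fails in the presence of second-level rules (your $(P\Rightarrow S)\Rightarrow P$ example), which is the inferential reason the restriction to $\basis{C}$ is essential. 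So your proposal is, if anything, more self-contained than the paper's own proof; the cost is only the extra bookkeeping in the $\dne$ induction that you already anticipate.
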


\begin{proof}
    We proceed by case analysis on the axioms in $\setClassical$ or $\setIntuitionistic$. For the $A \in \setClassical \cap \setIntuitionistic$, we leave $\basis{B}$ ambiguous as both $\basis{C}$ and $\basis{J}$ apply. We use $\phi$, $\psi$, and $\chi$ as arbitrary instantiations of formula-variables. \smallskip

    \noindent $\basis{B} \in \{\basis{C}, \basis{I}\}$: 
    \begin{itemize} 
        \item (K). Let $\base{C} \basesup{\mathfrak{B}} \base{B}$ be arbitrary such that the following hold:
        \begin{enumerate}[label=(\roman*),leftmargin=2cm] 
       \item $\supp_{\base{B}} \phi$
       \item $\supp_{\base{C}} \psi$
        \end{enumerate}
        Since $\base{C}$ is arbitrary, it follows by (Inf) from (i) and (ii) that $\supp_{\base{B}} \psi \to \phi$. By (i), we also obtain $\supp_{\base{B}} \phi \to (\psi \to \phi)$, as required.

        \item (S). Let $\base{D} \basesup{\mathfrak{B}} \base{C} \basesup{\mathfrak{B}} \base{B}$ be arbitrary and assume:
\begin{enumerate}[label=(\roman*),leftmargin=2cm] 
    \item $\supp_{\base{B}} (\phi \to (\psi \to \chi))$,
    \item $\supp_{\base{C}} (\phi \to \psi)$,
    \item $\supp_{\base{D}} \phi$.
\end{enumerate}
By (ii) and (iii) using (Inf), we obtain $\supp_{\base{D}} \psi$. Applying (i), we then have $\supp_{\base{C}} (\phi \to \chi)$. Hence, we conclude:
\[
\supp_{\base{B}} ((\phi \to \psi) \to (\phi \to \chi)).
\]
Thus, it follows that:
\[
\supp \phi \to (\psi \to \chi) \to \big( (\phi \to \psi) \to (\phi \to \chi) \big),
\]
as required.

        \item ($\irn \forall$). Given a term $t$, define $[t]$ as the set of all its closures:
\[
[t] := \{t[x_1 \mapsto s_1]\ldots[x_n \mapsto s_n] \mid \{x_1,\ldots,x_n\} = \FV(t), \quad s_1,\ldots,s_n \in \setClosedTerm \}.
\]
Let $\base{B}$ be arbitrary such that $\supp_{\base{B}} \forall x \phi$. By $(\forall)$, we have $\supp_{\base{B}} \phi[x \mapsto s]$ for any $s \in \setClosedAtom$. Since $[t] \subseteq \setClosedAtom \subseteq $ for any term $t$, it follows \emph{a fortiori} that $\supp_{\base{B}} \phi[x \mapsto s]$ for all $s \in [t]$. Thus, by \ref{cl:wff} and $(\forall)$, we conclude that $\supp_{\base{B}} \phi[x \mapsto t]$ for any $t \in \setTerm$. Consequently, we derive:
\[
\supp \phi \to \phi[x \mapsto t], \quad \text{for any } t \in \setTerm,
\]
as required.

    \item ($\irn \land$). Let $\base{B} \in \basis{B}$ and $\base{C} \basesup{\basis{B}} \base{B}$ be such that $\supp_{\base{C}} \phi$ and  $\supp_{\base{C}} \phi$. By ($\land$), $\supp_{\base{C}} \phi \land \psi$. Hence, by (Inf) and ($\to$) twice, $\supp \phi \to (\psi \to \phi \land \psi)$, as required. 
    \item ($\ern{\land}_1$). Immediate by (Inf), ($\to$) and ($\land$).
    \item ($\ern{\land}_2$). Immediate by (Inf), ($\to$) and ($\land$).
    \item ($\irn{\lor}_1$). Let $\base{B} \in \basis{B}$ be such that $\supp_{\base{B}} \phi$. Let $\base{C} \basesup{\basis{B}} \base{B}$ and $P$ be arbitrary such that $\phi \supp_{\base{C}} P$ and $\psi \supp_{\base{C}} P$. By Proposition~\ref{prop:monotonicity}, $\supp_{\base{C}} \phi$. Hence, by (Inf),  $\supp_{\base{B}} P$. Whence, by ($\lor$), $\supp_{\base{B}} \phi \lor \psi$. Finally, $\supp \phi \to \phi \lor \psi$ by (Inf) since $\base{B}$ was arbitrary.
    \item ($\irn{\lor}_2$). \emph{Mutatis mutandis} on the preceding case. 
    \item ($\ern \lor$). Let $\base{D} \basesup{\basis{B}} \base{C} \basesup{\basis{B}} \base{B}$ be arbitrary such that
    \begin{enumerate}[label=(\roman*)]
        \item $\supp_{\base{B}} \phi \to \chi$
        \item $\supp_{\base{C}} \psi \to \chi$
        \item $\supp_{\base{D}} \phi \lor \chi$
    \end{enumerate}
    We desire to show 
     \begin{enumerate}
        \item[(iv)] $\supp_{\base{D}} \chi$
    \end{enumerate}
     Observe that by Proposition~\ref{prop:monotonicity}, from (i) and (ii) we have:
    \begin{enumerate}
        \item[(i$'$)] $\supp_{\base{D}} \phi \to \chi$
        \item[(ii$'$)] $\supp_{\base{D}} \psi \to \chi$
        \end{enumerate}
    We proceed by semantic induction on $\chi$:
    \begin{itemize}
       \item $\chi \in \setClosedAtom$. Immediate by ($\lor$) and (Inf) on (i$'$) and (ii$'$).
       \item $\chi = \bot$. By ($\bot$), it suffices to show $\supp_{\base{D}} P$ for any $P \in \setClosedAtom$. From (i$'$) and (ii$'$) is easy to see $\phi \supp_{\base{D}} P$ and $\psi \supp_{\base{D}} P$. The desired result obtains by ($\lor$) and (Inf) on these statements.
       \item $\chi = \chi_1 \land \chi_2$.  By ($\land$), it suffices to show $\supp_{\base{D}} \chi_1$ and  $\supp_{\base{D}} \chi_2$. They obtain immediately by the \emph{induction hypothesis} (IH). 
       \item $\chi = \chi_1 \lor \chi_2$. Let $\base{E} \basesup{\basis{B}} \base{B}$ and $P \in \setClosedAtom$ be arbitrary be such that $\chi_1 \supp_{\base{E}} P$ and $\chi_2 \supp_{\base{E}} P$. It follows from (i$'$) and (ii$'$) that $\phi \supp_{\base{E}} P$ and $\psi \supp_{\base{E}} P$ using (Inf), ($\to$) and ($\lor$). By Proposition~\ref{prop:monotonicity} we have $\supp_{\base{E}} \phi \lor \psi$. Hence, by ($\lor$), we have $\supp_{\base{E}} P$. Whence, since $\base{E} \basesup{\basis{B}} \base{D}$, we have $\supp_{\base{D}} \chi$, as required. 
       \item $\chi = \chi_1 \to \chi_2$. Let $\base{E} \basesup{\basis{B}} \base{D}$ be such that $\supp_{\base{E}}\chi_1$. Observe $\phi \supp_{\base{E}} \chi_2$ and $\psi \supp_{\base{E}} \chi_2$ by (Inf) and ($\to$) on (i$'$) and (ii$'$). By Proposition~\ref{prop:monotonicity} on (iii), $\supp_{\base{E}} \phi \lor \psi$. Hence, by the IH, $\supp_{\base{E}} \chi_2$. Whence, $\supp_{\base{D}} \chi_1 \to \chi_2$ by (Inf) and ($\to$) since $\base{E} \basesup{\basis{B}} \base{B}$ was arbitrary. 
       \item $\chi = \forall x \chi'$.  By ($\land$), it suffices to show $\supp_{\base{D}} \chi'[x \mapsto t]$ for any $t \in \setClosedTerm$. This is immediate by the (IH). 
       \item $\chi = \exists x \chi'$. Let $\base{E} \basesup{\basis{B} \base{D}}$ and $P$ be arbitrary such that $\supp_{\base{E}}\chi'[x \mapsto t]$ for any $t\in \setClosedAtom$. It follows by (Inf), ($\to$), and $(\exists)$ on (i$'$) and (ii$'$) that $\phi \supp_{\base{E}} P$ and $\psi \supp_{\base{E}} P$. Hence, by ($\lor$), from (iii), infer $\supp_{\base{E}} P$. Whence, $\supp_{\base{D}} \chi$ by ($\exists$) as $\base{E} \basesup{\basis{B}} \base{D}$ was arbitrary. 
   \end{itemize}
   This completes the induction. It now follows by (Inf) and ($\to$) using (i), (ii), and (iii) that $\supp (\phi \to \chi)\to\big((\psi \to \chi) \to (\phi \lor \psi \to \chi)\big)$, as required. 
    
    \item ($\irn \exists$). Let $\base{B} \in \basis{B}$ be such that $\supp_{\base{B}} \phi[x \mapsto t]$. Let $\base{C} \basesup{\basis{B}} \base{B}$ and $P \in \setClosedAtom$ be arbitrary such that $\phi[x \mapsto s]\supp_{\base{C}} P$ for any $s \in \setClosedTerm$. By Proposition~\ref{prop:monotonicity}, $\supp_{\base{C}} P$. Hence, by (Inf), $\supp_{\base{B}} \exists x \phi$. Whence, $\supp \phi[x \mapsto t] \to \exists x \phi$, as required. 
    \item ($\irn \neg$). Let $\base{D} \basesup{\basis{B}} \base{C} \basesup{\basis{B}} \base{B}$ be arbitrary such that
    \begin{enumerate}[label=(\roman*)]
        \item $\supp_{\base{B}} \phi \to \psi$
        \item $\supp_{\base{C}} \phi \to \neg \psi$
        \item $\supp_{\base{D}} \phi$.  
    \end{enumerate}
    We desire to show
    \begin{itemize}
        \item[(iv)] $\supp_{\base{D}} \bot$ 
    \end{itemize}
    Observe by ($\to$) and (Inf) on (i) and (ii) using (iii) that $\supp_{\base{D}} \psi$ and $\supp_{\base{D}} \neg \psi$. It follows by (Inf) that $\supp_{\base{D}} \bot$. Hence, $\supp (\phi \to \psi) \to \big((\phi \to \neg \psi) \to \neg \phi\big)$, as required.
    \item ($\efq$).  Let $\base{C} \basesup{\basis{B}} \base{B}$ be arbitrary such that
    \begin{enumerate}[label=(\roman*)]
        \item $\supp_{\base{B}} \phi \to \bot$
        \item $\supp_{\base{C}} \phi$. 
    \end{enumerate}
    We desire to show
    \begin{enumerate}
        \item[(iii)]  $\supp_{\base{C}} \psi$
    \end{enumerate}
    Observe by (i) and (ii), $\supp_{\base{C}} \bot$. We proceed by semantic induction on $\psi$. 
        \begin{itemize}
            \item $\psi = P \in \setClosedAtom$. Immediate by  $(\bot)$.
            \item $\psi = \bot$. Trivial by the observation. 
            \item $\psi =\psi_1 \land \psi_2$. By ($\land$), it suffices to show $\supp_{\base{C}} \phi_1$ and $\supp_{\base{C}} \phi_2$. These obtains immediately from the \emph{induction hypothesis} (IH).
            \item $\psi = \psi_1 \lor \psi_2$. Let $\base{D} \basesup{\basis{B}} \base{C}$ and $P$ be arbitrary such that $\psi_1 \supp_{\base{D}} P$ and  $\psi_2 \supp_{\base{D}} P$. By Proposition~\ref{prop:monotonicity}, $\supp_{\base{D}} \bot$. Thus, by ($\bot$),  $\supp_{\base{D}} P$. Hence, by ($\lor$), $\supp_{\base{C}} \psi_1 \lor \psi_2$ as $\base{D} \basesup{\base{C}}$ was arbitrary.  
            \item $\phi = \phi_1 \to \phi_2$. Let $\base{C} \basesup{\mathfrak{B}} \base{B}$ be arbitrary such that $\supp_{\base{C}} \phi_1$. Since $\supp_\base{B} \bot$, infer $\supp_{\base{C}} \bot$ (by Proposition~\ref{prop:monotonicity}). By the IH, infer   $\supp_{\base{C}} \phi_2$. The desired result follows from (Inf) since $\base{B}$ was arbitrary.
            \item $\phi =  \forall x \psi$. By the IH, $\supp_{\base{C}} \phi[x \mapsto t]$ for any $t \in \setClosedTerm$. The desired results follows from $(\forall)$.
            \item $\psi = \exists x \psi'$. Let $\base{D} \basesup{\basis{B}} \base{C}$ and $P$ be arbitrary such that $\phi[x \mapsto t] \supp_{\base{D}} P$ for any $t \in \setClosedAtom$. By Proposition~\ref{prop:monotonicity}, $\supp_{\base{D}} \bot$. Thus, by ($\bot$),  $\supp_{\base{D}} P$. Hence, by ($\lor$), $\supp_{\base{C}} \exists x \psi$ as $\base{D} \basesup{\base{C}}$ was arbitrary.  
        \end{itemize}
        This completes the induction.
\end{itemize}

\noindent $\basis{B} = \basis{C}$.
        \begin{itemize}
        \item ($\dne$). This case is given in Sandqvist~\cite{Sandqvist2009CL} --- the proof proceeds by induction on $\phi$ with the base case given by a contradiction that makes us of the assumption that bases are direct. 
        \end{itemize}
        This completes the case analysis.
\end{proof}

In the following three lemmas, $\basis{B}$ may be any set of atomic systems.  

\begin{lemma}[Hypothesis]
    If $\phi \in \Gamma$, then $\Gamma \supp \phi$.
\end{lemma}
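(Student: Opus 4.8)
The plan is to unfold the definitions of support in sequence, whereupon the statement becomes immediate. The judgment $\Gamma \supp \phi$ abbreviates, by the final clause of Figure~\ref{fig:support}, the assertion that $\Gamma \supp_{\base{B}} \phi$ holds for every $\base{B} \in \basis{B}$. So I would begin by fixing an arbitrary base $\base{B} \in \basis{B}$ and reducing the goal to establishing $\Gamma \supp_{\base{B}} \phi$.

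Next, since $\phi \in \Gamma$ the set $\Gamma$ is non-empty, so the clause (Inf) applies directly: $\Gamma \supp_{\base{B}} \phi$ holds precisely when, for every $\base{C} \basesup{\basis{B}} \base{B}$ such that $\supp_{\base{C}} \psi$ for all $\psi \in \Gamma$, one has $\supp_{\base{C}} \phi$. I would therefore fix such a $\base{C}$ together with the standing assumption that every member of $\Gamma$ is supported in $\base{C}$.

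The key observation is then simply that $\phi$ is itself a member of $\Gamma$, so the assumption ``$\supp_{\base{C}} \psi$ for all $\psi \in \Gamma$'' specializes, at $\psi = \phi$, to exactly the desired $\supp_{\base{C}} \phi$. This discharges the obligation imposed by (Inf), yielding $\Gamma \supp_{\base{B}} \phi$; and since $\base{B}$ was arbitrary, $\Gamma \supp \phi$.

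There is no genuine obstacle here: the lemma is a reflexivity property already built into the (Inf) clause, and the argument is a pure unfolding of definitions requiring neither monotonicity nor semantic induction. The only point meriting a moment's care is the interface with open formulae --- if $\Gamma$ or $\phi$ carries free variables, the relevant instances of support are those of the universal closures fixed by the clause \ref{cl:wff} --- but this does not affect the reasoning, since the hypothesis ``$\supp_{\base{C}} \psi$ for all $\psi \in \Gamma$'' and the conclusion ``$\supp_{\base{C}} \phi$'' are read under the same convention, and $\phi \in \Gamma$ holds regardless of that convention.
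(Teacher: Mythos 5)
Your proposal is correct and is exactly the argument the paper intends: the paper's proof is the one-liner ``Immediate by (Inf),'' and your write-up is simply that argument with the quantifiers over bases and base-extensions unfolded explicitly, including the correct observation that $\phi\in\Gamma$ specializes the antecedent of (Inf) to the desired conclusion. The extra remarks (non-emptiness of $\Gamma$, the \textsc{wff} convention for open formulae) are harmless and do not change the reasoning.
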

\begin{proof}
    Immediate by (Inf).
\end{proof}

\begin{lemma}[Modus Ponens]
    If $\Gamma \supp \phi$ and $\Gamma \supp \phi \to \psi$, then $\Gamma \supp \psi$.
\end{lemma}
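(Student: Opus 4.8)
The plan is to unfold the two global support hypotheses down to a single fixed base and then chain them together through the implication clause $(\to)$. The only structural fact I need beyond the definitions in Figure~\ref{fig:support} is that base-extension is \emph{reflexive}: for any $\base{C} \in \basis{B}$ we have $\base{C} \basesup{\basis{B}} \base{C}$, since $\base{C} \supseteq \base{C}$ and $\base{C} \in \basis{B}$ hold trivially. This reflexivity is what lets me collapse each ``for all extensions'' quantifier onto the base at hand.

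First I would unfold the goal $\Gamma \supp \psi$. By the final clause of Figure~\ref{fig:support} it suffices to fix an arbitrary $\base{B} \in \basis{B}$ and establish $\Gamma \supp_{\base{B}} \psi$; by (Inf) it then suffices to fix an arbitrary $\base{C} \basesup{\basis{B}} \base{B}$, assume $\supp_{\base{C}} \gamma$ for every $\gamma \in \Gamma$, and derive $\supp_{\base{C}} \psi$. Note that $\base{C} \in \basis{B}$, since base-extension presupposes membership in the basis.

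Next I would harvest the two hypotheses at this base $\base{C}$. From $\Gamma \supp \phi$ we obtain $\Gamma \supp_{\base{C}} \phi$; applying (Inf) to the (reflexive) extension $\base{C} \basesup{\basis{B}} \base{C}$ together with the standing assumption $\supp_{\base{C}} \gamma$ for all $\gamma \in \Gamma$ yields $\supp_{\base{C}} \phi$. By the identical argument, $\Gamma \supp \phi \to \psi$ gives $\supp_{\base{C}} \phi \to \psi$.

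Finally I would discharge the implication. By $(\to)$, the statement $\supp_{\base{C}} \phi \to \psi$ unfolds to $\phi \supp_{\base{C}} \psi$, i.e., by (Inf), for every $\base{E} \basesup{\basis{B}} \base{C}$, if $\supp_{\base{E}} \phi$ then $\supp_{\base{E}} \psi$. Instantiating at $\base{E} = \base{C}$ (again by reflexivity) and feeding in $\supp_{\base{C}} \phi$ gives $\supp_{\base{C}} \psi$, as required. I do not expect any genuine obstacle: the argument is pure bookkeeping across the nested clauses, no semantic induction is required since the logical weight never increases, and the only point to watch is the repeated implicit appeal to reflexivity of $\basesup{\basis{B}}$.
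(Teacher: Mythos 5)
Your proof is correct and takes essentially the same route as the paper's: both unfold the hypotheses via (Inf) at a base supporting $\Gamma$, then chain through the $(\to)$ clause instantiated at that same base. The only difference is presentational --- you make explicit the reflexivity of $\basesup{\basis{B}}$ that the paper's terser argument uses implicitly.
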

\begin{proof}
    Let $\base{B}$ be such that $\supp_{\base{B}} \gamma$ for $\gamma \in \Gamma$. By (Inf), it suffices to show $\supp_\base{B} \psi$. 
    
    By (Inf) on the assumption of $\base{B}$, we obtain (1) $\supp_{\base{B}} \phi$ and (2) $\supp_\base{B} \phi \to \psi$. Hence, by (Inf), on (2) using (1), we obtain $\supp_{\base{B}} \psi$, as required.  
\end{proof}

\begin{lemma}[Generalization] \label{lem:generalization}
    If $\Gamma \supp \psi \to \phi$ and $x \not \in \FV(\psi)$, then $\Gamma \supp \psi \to \forall x \phi$. 
\end{lemma}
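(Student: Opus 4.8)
The plan is to unfold the goal $\Gamma \supp \psi \to \forall x \phi$ through the clauses $(\to)$, (Inf), and $(\forall)$, reduce the required instance of $\forall x \phi$ to an instance of $\phi$, and then feed that instance back into the premise $\Gamma \supp \psi \to \phi$. Concretely, I first fix an arbitrary base $\base{B}$ with $\supp_{\base{B}} \gamma$ for every $\gamma \in \Gamma$ and aim to show $\supp_{\base{B}} \psi \to \forall x \phi$. By $(\to)$ and (Inf) it suffices to take an arbitrary $\base{C} \basesup{\basis{B}} \base{B}$ with $\supp_{\base{C}} \psi$ and establish $\supp_{\base{C}} \forall x \phi$; by $(\forall)$ this amounts to showing $\supp_{\base{C}} \phi[x \mapsto t]$ for every $t \in \setClosedTerm$.

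Next I would invoke the premise at $\base{C}$. By Proposition~\ref{prop:monotonicity} the hypotheses $\supp_{\base{B}} \gamma$ lift to $\supp_{\base{C}} \gamma$, so from $\Gamma \supp \psi \to \phi$ we obtain $\supp_{\base{C}} \psi \to \phi$, i.e. $\psi \supp_{\base{C}} \phi$. The heart of the argument is the observation that, because $x \notin \FV(\psi)$, instantiating $x$ by a closed term $t$ leaves $\psi$ untouched, so the relevant closed instance of $\psi \to \phi$ is exactly $\psi \to \phi[x \mapsto t]$. Hence from $\supp_{\base{C}} \psi$ together with this instance, one application of $(\to)$ and (Inf) yields $\supp_{\base{C}} \phi[x \mapsto t]$. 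Since $t$ was arbitrary, $(\forall)$ gives $\supp_{\base{C}} \forall x \phi$, and retracing the quantifier steps delivers $\supp_{\base{B}} \psi \to \forall x \phi$, as required.

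The main obstacle is the bookkeeping forced by open formulae and the \ref{cl:wff} clause: support of an open formula is defined as support of its universal closure, so $\supp_{\base{C}} \psi \to \phi$ really means $\supp_{\base{C}} \uniclose{(\psi \to \phi)}$, and similarly for $\phi[x \mapsto t]$ and for each $\gamma$. To make the instantiation step precise I would first record the routine fact, extracted from \ref{cl:wff} and repeated use of $(\forall)$, that $\supp_{\base{C}} \theta$ for an open $\theta$ is equivalent to $\supp_{\base{C}} \theta\sigma$ for every closing substitution $\sigma$ of $\FV(\theta)$ (this also silently uses that the order of universal quantifiers is immaterial under support, again immediate from $(\forall)$). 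With this characterisation in hand, I would fix a closing substitution $\tau$ of the free variables of $\psi \to \forall x \phi$, reason about the \emph{closed} formula $\psi\tau \to \forall x(\phi\tau)$, and extend $\tau$ to $\tau[x \mapsto t]$ when applying the premise. The side condition $x \notin \FV(\psi)$ is precisely what guarantees $\psi\tau[x \mapsto t] = \psi\tau$, so that the single assumption $\supp_{\base{C}} \psi\tau$ serves uniformly for every $t$; everything else is a direct unwinding of the clauses together with Proposition~\ref{prop:monotonicity}.
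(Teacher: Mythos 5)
Your proof is correct and takes essentially the same route as the paper's: both unfold (Inf), $(\to)$, and $(\forall)$, transport the hypotheses along base-extension via Proposition~\ref{prop:monotonicity}, and exploit $x \not\in \FV(\psi)$ so that a single assumption of $\psi$ serves uniformly for every instance $\phi[x \mapsto t]$. The only difference is presentational: the paper compresses the open-formula bookkeeping into a case split on whether $x \in \FV(\phi)$ together with a direct appeal to \ref{cl:wff}, whereas you spell out the equivalent closing-substitution argument explicitly.
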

\begin{proof}
    Let $\base{B}$ be such that $\supp_{\base{B}} \gamma$ for  $\gamma \in \Gamma, \psi$. Hence, by (Inf) and ($\to$), $\supp_{\base{B}} \phi$. If $x \in \FV(\phi)$, the result obtains by \ref{cl:wff} and $(\forall)$. Otherwise, $x \not \in \FV(\phi)$ so $\phi[x \mapsto t] = \phi$ for any $t \in \setClosedTerm$ and the desired result obtains by ($\forall$).
\end{proof}

\begin{lemma}[Existential Instantiation]
    If $\Gamma \supp \phi \to \psi$ and $x \not \in \FV(\psi)$, then $\Gamma \supp \exists x\phi \to \psi$.
\end{lemma}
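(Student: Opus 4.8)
The plan is to mirror the structure of the $(\ern{\lor})$ case of Lemma~\ref{lem:axiom}: reduce the goal to a statement about atoms via the clauses $(\to)$ and $(\exists)$, and then recover the general case by a semantic induction on $\psi$. Throughout, the hypothesis $x \notin \FV(\psi)$ is what lets the instantiation $\psi[x \mapsto t] = \psi$ pass through the argument unharmed.

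First I would dispatch the free variables other than $x$. Since support on open formulae is defined by \ref{cl:wff} through universal closure, it suffices to fix arbitrary closed instances for the free variables of $\exists x \phi \to \psi$ and argue for the resulting formula; the $(\forall)$ clause lets one instantiate these uniformly, exactly as in Lemma~\ref{lem:generalization}. Hence I may assume $\phi$ has at most $x$ free and $\psi$ is closed. Now let $\base{B}$ be such that $\supp_{\base{B}} \gamma$ for each $\gamma \in \Gamma$. By (Inf) and $(\to)$ it suffices to show $\exists x \phi \supp_{\base{B}} \psi$; and unfolding the premise $\Gamma \supp \phi \to \psi$ through \ref{cl:wff}, $(\forall)$, $(\to)$ together with $x \notin \FV(\psi)$ yields $\phi[x \mapsto t] \supp_{\base{B}} \psi$ for every $t \in \setClosedTerm$.

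The crux is the following claim, which I would isolate and prove by semantic induction on $\psi$: for any base $\base{D}$, if $\supp_{\base{D}} \exists x \phi$ and $\phi[x \mapsto t] \supp_{\base{D}} \psi$ for all $t \in \setClosedTerm$, then $\supp_{\base{D}} \psi$. The atomic case $\psi = P$ is immediate from the $(\exists)$ clause applied at $\base{D}$ (taking the reflexive extension). The cases $\psi = \bot$, $\psi = \psi_1 \lor \psi_2$, and $\psi = \exists y \psi'$ all reduce the goal to an atom at a fresh extension $\base{E} \basesup{\basis{B}} \base{D}$: using Proposition~\ref{prop:monotonicity} to transport $\supp_{\base{E}} \exists x \phi$ and $\phi[x \mapsto t] \supp_{\base{E}} \psi$, one derives $\phi[x \mapsto t] \supp_{\base{E}} Q$ for the relevant atom $Q$ via $(\bot)$, $(\lor)$, or $(\exists)$ respectively, and then closes with the $(\exists)$ clause for $\exists x \phi$ — this parallels the $(\ern{\lor})$ argument verbatim. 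The cases $\psi = \psi_1 \land \psi_2$, $\psi = \psi_1 \to \psi_2$, and $\psi = \forall y \psi'$ instead fall to the induction hypothesis after peeling the outermost clause (for implication one additionally transports both hypotheses to an extension $\base{E}$ forcing $\psi_1$). Applying the claim at each $\base{C} \basesup{\basis{B}} \base{B}$ with $\supp_{\base{C}} \exists x \phi$, using that the relativized judgement $\phi[x \mapsto t] \supp_{\base{C}} \psi$ persists along extensions, gives $\supp_{\base{C}} \psi$; hence $\exists x \phi \supp_{\base{B}} \psi$ and the lemma.

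I expect the induction to be the main obstacle, specifically the disjunction and existential cases, where one must invoke monotonicity to relocate both hypotheses to the fresh extension $\base{E}$ and thread the atomic $(\exists)$ clause through without confusing the two existential quantifiers at play (the semantic one governing $\exists x \phi$ and, in the case $\psi = \exists y \psi'$, the one inside $\psi$). Care with the weight measure $w$ is also needed to see that $\psi'[y \mapsto s]$ is a legitimate smaller instance in the quantifier cases, so that the induction is genuinely on semantic rather than structural complexity.
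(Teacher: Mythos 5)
Your proposal is correct and follows essentially the same route as the paper's proof: fix a base supporting $\Gamma$, unfold the hypothesis to $\phi[x \mapsto t] \supp_{\base{B}} \psi$ for all $t \in \setClosedTerm$, and then establish $\supp_{\base{C}} \psi$ at any extension supporting $\exists x \phi$ by semantic induction on $\psi$, with exactly the same case split (atoms via $(\exists)$; $\bot$, $\lor$, $\exists$ by reducing to atoms at further extensions using monotonicity; $\land$, $\to$, $\forall$ by the induction hypothesis). Your repackaging of the induction as a standalone claim with both hypotheses localized at one base $\base{D}$ is only a cosmetic difference from the paper, which keeps the instantiated hypothesis at $\base{B}$ and the goal at the extension $\base{C}$.
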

\begin{proof}
    Let $\base{B} \in \basis{B}$ be such that $\supp_{\base{B}} \gamma$ for $\gamma \in \Gamma$. 
    Let $\base{C} \basesup{\basis{B}} \base{B}$ be arbitrary such that
    \begin{enumerate}[label=(\roman*)]
    \item
    $\supp_{\base{C}} \exists x \phi$
    \end{enumerate}
    Observe that by the assumption,
       \begin{enumerate}
        \item[(ii)]  $\phi[x \mapsto t] \supp_{\base{B}} \psi$ for any $t \in \setClosedTerm$
    \end{enumerate}
    We desire to show
    \begin{enumerate}
        \item[(iii)] $\supp_{\base{C}} \psi$
    \end{enumerate}
   To this end, we proceed by semantic induction on $\psi$:
   \begin{itemize}
       \item $\psi \in \setClosedAtom$. Immediate by ($\exists$).
       \item $\psi = \bot$. By ($\bot$), it suffices to show $\supp_{\base{C}} P$ for any $P \in \setClosedAtom$. This obtains by $(\exists)$ using Proposition~\ref{prop:monotonicity}. 
       \item $\psi = \psi_1 \land \psi_2$. By ($\land$), it suffices to show $\supp_{\base{C}} \psi_1$ and $\supp_{\base{C}} \psi_2$. These obtains immediately from the  \emph{induction hypothesis} (IH).
       \item $\psi = \psi_1 \lor \psi_2$. Let $\base{D} \basesup{\basis{B}} \base{C}$ and $P$ be arbitrary such that $\psi_1 \supp_{\base{D}} P$ and $\psi_2 \supp_{\base{D}} P$. By (ii), $\phi[x \mapsto t] \supp_{\base{D}} P$ for any $t \in \setClosedAtom$. Hence, by (i), $\supp_{\base{D}} P$. Whence, $\supp_{\base{C}} \psi_1 \lor \psi_2$, by ($\lor$). 
       \item $\psi = \psi_1 \to \psi_2$. Let $\base{D} \basesup{\basis{B}} \base{C}$ be arbitrary such that $\supp_{\base{D}} \psi_1$. It follows from the IH that $\supp_{\base{D}} \psi_2$. Hence, by (Inf) and ($\to$), $\supp_{\base{C}} \psi_1 \to \psi_2$, as required.  
       \item $\psi = \forall x \psi'$. By ($\forall$), it suffices to show $\supp_{\base{C}} \psi[x \mapsto t]$ for any $t \in \setClosedTerm$. This obtains immediately from the IH. 
       \item $\psi = \exists x \psi'$. Let $\base{D} \basesup{\basis{B}} \base{C}$ and $P$ be arbitrary such that $\psi'[x \mapsto t] \supp_{\base{D}} P$ for any $t \in \setClosedTerm$.  From (ii), it follows that $\phi[x \mapsto t] \supp_{\base{D}} P$ for any $t \in \setClosedTerm$. Thus, by (i), $\supp_{\base{D}} P$. Hence, $\supp_{\base{C}} \exists x \psi'$ since $\base{D} \basesup{\basis{B}} \base{C}$ was arbitrary. 
   \end{itemize}
   This completes the induction.
\end{proof}

These lemmas collectively demonstrate soundness of classical consequence with respect to this semantics:

\begin{theorem}[Soundness] \label{thm:soundness}
       If $\proves_{\rn{A}} \phi$ with $\rn{A}=\rn{C}$ (resp. $\rn{A} = \rn{I}$), then $\supp \phi$ over the basis $\basis{B}=\basis{C}$ (resp. $\basis{B}=\basis{I}$).
\end{theorem}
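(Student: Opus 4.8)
The plan is to strengthen the statement to arbitrary contexts and then proceed by induction on the inductive definition of the consequence relation. That is, I would first establish that $\Gamma \proves_{\rn{A}} \phi$ implies $\Gamma \supp \phi$ (over the matching basis), and then recover the theorem as the special case $\Gamma = \emptyset$, noting that $\supp \phi$ and $\emptyset \supp \phi$ coincide by (Inf). The induction is on the last rule applied in the derivation witnessing $\Gamma \proves_{\rn{A}} \phi$, as set out in Definition~\ref{def:consequence}, and there are exactly five cases, each of which has been prepared in advance by the preceding lemmas.

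For the \textsc{Axiom} case, where $\phi = \iota(\mathfrak{s})$ for some $\mathfrak{s} \in \mathsf{A}$, I would apply Lemma~\ref{lem:axiom} to obtain $\supp \iota(\mathfrak{s})$, whence $\Gamma \supp \phi$ follows immediately by (Inf). This is the only case sensitive to the choice of basis: for $\rn{A} = \rn{C}$ one works over $\basis{C}$ and the schemes include $\dne$, which is discharged by the $\basis{B} = \basis{C}$ clause of the Axiom lemma, whereas for $\rn{A} = \rn{I}$ one works over $\basis{I}$ and $\dne$ never arises. The remaining four cases --- \textsc{Hypothesis}, \textsc{Modus Ponens}, \textsc{Generalization}, and \textsc{Existential Instantiation} --- are each immediate from the correspondingly named lemma: one applies the induction hypothesis to the premises so that the relevant $\Gamma \supp (\cdot)$ judgments become available, and then invokes the lemma to conclude $\Gamma \supp \phi$. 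Since those four lemmas were established for an arbitrary basis, they apply uniformly in both the classical and intuitionistic settings.

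I do not expect a genuine obstacle, since all the semantic content already lives in the lemmas and the theorem is merely their assembly by rule induction. The points requiring care are bookkeeping ones: carrying the ambient context $\Gamma$ through each rule unchanged; checking that the induction hypotheses match the premises of the rule being applied (two premises for \textsc{Modus Ponens}, and the side conditions $x \notin \FV(\psi)$ for \textsc{Generalization} and \textsc{Existential Instantiation}, which are inherited verbatim by the corresponding lemmas); and matching each logic to its basis so that the \textsc{Axiom} case is discharged with the correct instance of Lemma~\ref{lem:axiom}. I would also flag the minor terminological alignment between the ``substitution $\sigma$'' of the Axiom lemma and the ``instantiation $\iota$'' of Definition~\ref{def:consequence}: these denote the same operation of replacing formula-variables with formulae, so the lemma applies directly.
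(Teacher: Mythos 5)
Your proposal is correct and takes essentially the same approach as the paper: the paper's own proof is a one-line assembly of Lemmas~\ref{lem:axiom}--\ref{lem:generalization} (and the Existential Instantiation lemma) against the clauses of Definition~\ref{def:consequence}, which is exactly the rule induction you spell out, with your strengthening to arbitrary contexts $\Gamma$ and the $\sigma$/$\iota$ alignment merely making explicit what the paper leaves implicit.
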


\begin{proof}
 Lemma~\ref{lem:axiom} -- Lemma~\ref{lem:generalization} (below) exactly correspond to the clauses in Definition~\ref{def:consequence}. Hence, support ($\supp$) subsumes classical (resp. intuitionistic) consequence, as required. 
\end{proof}

\section{Completeness} \label{sec:completeness}

The completeness of an axiomatization $\mathsf{A}$ with respect to a semantics is the property that all valid consequences may be derived --- that is, if $\Gamma \supp \phi$, then $\Gamma \vdash_{\mathsf{A}} \phi$. To show this for the B-eS in this paper, we employ the strategy used by Sandqvist~\cite{Sandqvist2015IL} for intuitionistic propositional logic, which we describe below.

The key idea underlying this simulation technique is the systematic translation of formulas into atomic sentences. Suppose $\supp \phi$ and that $\phi$ contains a subformula $a \land b$, where $a, b \in \mathbb{B}$. Following the natural deduction rules governing conjunction, Sandqvist makes sure to include the following rules in the ``specifically tailored'' base $\base{N}$ for $\phi$:
\[
\infer{r}{~~a & b~~} \qquad \infer{~~a~~}{r} \qquad \infer{~~b~~}{r}
\]
where $r$ is a fresh atomic sentence representing $a \land b$. This means that $r$ behaves in $\base{N}$ as $a \land b$ behaves in $\mathsf{NJ}$~\cite{Gentzen} --- that is, as in intuitionistic propositional logic. 

More generally, each subformula $\chi$ of $\phi$ is assigned a corresponding basic counterpart $\kflat{\chi}$ --- for example, $r = \kflat{(a \land b)}$. The major work is establishing their equivalence within $\base{N}$:
\[
\chi \supp_{\base{N}} \kflat{\chi} \quad \text{and} \quad \kflat{\chi} \supp_{\base{N}} \chi.
\]
Since we assume $\supp \phi$, it follows that $\supp \kflat{\phi}$, and given that every rule in $\base{N}$ corresponds to an intuitionistic natural deduction rule, we conclude $\vdash \phi$, as required. 

Applying this technique to classical logic and the present setup requires some work. There are two principal challenges. Firstly, simulating Gentzen's $\rn{NK}$, the classical analogue of $\rn{NJ}$, demands second-level rules of implication introduction,
\[
\infer{\phi \to \psi}{\deduce{\psi}{[\phi]}}
\]
However, rules with discharge are not permitted in $\basis{C}$ as admitting them comes at the cost of soundness --- for example, $\dne$ would no longer be valid.  Secondly, in this paper we are working in the first-order setting which means the simulations requires accounting for quantification and open formulae, while atomic rules only contain closed atoms. It no longer suffices to consider only sub-formulae of the given valid sequent $\Gamma \seq \phi$, nor can we associate to each such sub-formula $\rho$ a \emph{unique} atom $r$. These challenges notwithstanding, the strategy goes through as before. By being careful about the setup, we can overcome both of these challenges. We now proceed with the technical details. 

Given a set of formulae $\Gamma$ and a formula $\gamma$, let $\Xi$ be the set of sub-formulae. Recall that if $\phi$ contains $x$ as a free variables, the subformulae of $\forall x \phi$ and $\exists x \phi$ are the sentences $\phi[x \mapsto t]$ for $t \in \setClosedTerm$. Fix an injection $\kflat{(-)}: \Xi \to \setClosedAtom$.


To handle the quantifiers and free variables, we introduce \emph{eigenvariables}. They are a set of constants that we use as variables during the simulation:
\begin{itemize}
\item Let $\setEigen \subseteq \setCons$ be the set of constants that do not appear in $\Gamma$ or $\phi$
    \item Let $\alpha:\setVar \to \setEigen$ be a bijection. 
\item Extend $\kflat{(-)}$ to open formulae using $\alpha$ such that an open formulae is simulated by the atom that simulates its universal closure. That is, if $\{x_1,\ldots,x_m\} = \FV(\phi)$, then define 
\[
\bar{\phi} := \phi[x_1 \mapsto \alpha(x_1)],\ldots,[x_m \mapsto \alpha(x_m)]
\]
and assign
\[
\kflat{\phi} :=  \kflat{(\bar{\phi})} \tag{$\textsc{eigen}$} \label{def:eigen}
\]
\end{itemize}
This defines the encoding of formulae as atoms required for the simulation. 

To recover formulae from the `flattening' operation, we require functions  $\ksharpl{(-)}:\setClosedAtom \to \setFormulas$ and  $\ksharpr{(-)}:\setClosedAtom \to \setFormulas$   satisfying some conditions. They both decode by reversing $\kflat{(-)}$ but they behave differently with respect to encodings of open formulae. This is important for relating constructions in the simulation bases $\base{K}$ and $\base{J}$ to construction in $\rn{C}$ and $\rn{I}$, respectively. The conditions are as follows: 
\begin{itemize}
    \item If $P=\kflat{\phi}$ such that $\phi$ is closed and does \emph{not} contain any eigenvariables, then $\ksharpl{P} = \ksharpr{P} = \phi$ 
    \item If $P=\kflat{\phi}$ where  $\phi$ is closed but contains eigenvariables, then $\ksharpl{P}$ and $\ksharpr{P}$ distinguish whether or not we want the closed or open version of the formula, respectively:
   \[
\ksharpl{P} := \uniclose{(\phi[a_1 \mapsto \alpha^{-1}(a_1)],\ldots,[a_n \mapsto \alpha^{-1}(a_n)])} \tag{$\textsc{var}_1$}
\]
    and
    \[
\ksharpr{P} := \phi[a_1 \mapsto \alpha^{-1}(a_1)],\ldots,[a_n \mapsto \alpha^{-1}(a_n)] \tag{$\textsc{var}_2$}
\]
where $\{a_1,\ldots,a_n\}$ are all the eigenvariables in $\phi$
\item  If $P = \kflat{\phi}$ where $\phi$ is open, then
\[
\ksharpl{P} = \uniclose{\phi} \qquad  \mbox{ and } \qquad \ksharpr{P} = \ksharpr{(\bar{\phi})}
\]
\item If $P$ is not in the image of $\kflat{(-)}$, then $\ksharpl{P}$ and $\ksharpr{P}$ can be any formula, but they must be the same formula (i.e., $\ksharpl{P} = \ksharpr{P}$)
\end{itemize}

Following the plan by Sandqvist~\cite{Sandqvist2015IL} outlined above, we now simulate a proof system for FOL. However, rather than simulating a natural deduction system, we simulate the Hilbert-Frege systems in Section~\ref{sec:fol}.

\begin{definition}[Natural Base]
Given $\kflat{(-)}$ for some sequence $\Gamma \seq \gamma$ we define two bases:
\begin{itemize}
    \item The classical natural base  $\base{K}$ is given by all instances of the atomic rules in the \emph{first} section of Figure~\ref{fig:simulation} together with $\kflat{(\dne)}$.
    \item The intuitionistic natural base (relative to  $\kflat{(-)}$) $\base{J}$ is given by all instances of the atomic rules in Figure~\ref{fig:simulation} except $\kflat{(\dne)}$.
\end{itemize}
In both cases, the symbols $\phi, \psi, \chi,\xi$ ranger over $\Xi$, $P$ ranges over $\setClosedAtom$, $x$ ranges over $\setVar$, and $t$ ranges over $\setTerm$, but with the constraint that $x \not \in \FV(\xi)$ in the instances of $(\gen)$.
\end{definition}

\begin{figure}
\hrule
 \[
\begin{array}{ l @{\hspace{-1mm}}  r }
    \Rightarrow \kflat{\big(\phi \to (\psi \to \phi)\big)} & \hspace{0.4\linewidth}  \kflat{(\textsc{K})} \\
 \Rightarrow \kflat{\big((\phi \to (\psi \to \chi)) \to \big((\phi \to \psi) \to (\phi \to \chi)\big) \big)} & \kflat{(\textsc{S})} \\
    \Rightarrow \kflat{\big(\forall x \phi \to \phi[x \mapsto t]\big)}  & 
   \kflat{(\ern \forall)} \\
   \{\Rightarrow \kflat{\phi}, \Rightarrow \kflat{\big(\phi \to \psi\big)} \} \Rightarrow \kflat{\psi} & \kflat{(\mp)} \\
  \{\Rightarrow \kflat{(\xi \to \phi[x \mapsto \alpha(x)])}\} \Rightarrow \kflat{(\xi \to \forall x\phi)} & 
   \kflat{(\gen)} \\
   \dotfill & \dotfill \\[1mm]
 \Rightarrow \kflat{\big(\phi \to (\psi \to (\phi \land \psi)) \big)}& \kflat{(\irn{\land})} \\
 \Rightarrow \kflat{\big(\phi \land \psi \to \phi\big)} & \kflat{(\ern \land_1)} \\
 \Rightarrow \kflat{\big(\phi \land \psi \to \psi\big)} & \kflat{(\ern \land_2)} \\
 \Rightarrow \kflat{\big(\phi \to \phi \lor \psi\big)} & \kflat{(\irn \lor_1)} \\
 \Rightarrow  \kflat{\big(\psi \to  \phi \lor \psi\big)} & \kflat{(\irn \lor_2)} \\
 \Rightarrow \{\kflat{\phi} \Rightarrow P, \kflat{\psi} \Rightarrow P, \Rightarrow  \kflat{(\phi \lor \psi)} \} \Rightarrow P & \kflat{(\ern \lor)} \\  
   \Rightarrow  \kflat{\big(\phi_x^t \to \exists x \phi\big)}  & 
   \kflat{(\irn \exists)} \\
   \Rightarrow  \kflat{\big((\phi \to \psi) \to \big((\phi \to \neg \psi) \to \neg \phi \big)\big)} & \kflat{(\irn \neg)}\\
  \{  \Rightarrow \kflat{\bot} \} \Rightarrow P & \kflat{(\efq)} \\
  \{\Rightarrow \kflat{(\exists \phi)},  \kflat{ \big(\phi[x\mapsto t]\big)} \Rightarrow P\} \Rightarrow P & \kflat{(\ern \exists)}\\
   \hspace{-2ex} \dotfill & \dotfill \\[1mm]
    \Rightarrow  \kflat{\big(\neg \neg \phi \to \phi)} & \kflat{(\dne)}
    
\end{array}
\]
    \hrule
    \caption{Simulation Bases $\base{K}$ and $\base{J}$} 
    \label{fig:simulation}
\end{figure}

This definition requires some remarks. 
Clearly, $(\kflat{K})$, $(\kflat{S})$, $\kflat{(\ern \forall)}$, $\kflat{(\irn \land)}$, $\kflat{(\irn \land)}$, $\kflat{(\ern{\land}_1)}$, $\kflat{(\ern{\land}_2)}$, $\kflat{(\irn \land)}$, $\kflat{(\irn{\lor}_1)}$, $\kflat{(\irn{\lor}_2)}$, $\kflat{(\ern{\lor})}$, $\kflat{(\ern{\neg})}$, and $(\kflat{\dne})$ are intended to simulate corresponding axioms in Figure~\ref{fig:simulation}. The rules $\kflat{\mp}$ and $\kflat{\gen}$ are intended to simulate \textsc{modus ponens} and \textsc{generalization} in Definition~\ref{def:consequence}. The behaviours $\textsc{hypothesis}$ and $\textsc{axiom}$ follow from $\textsc{ref}$ and $\textsc{app}_2$, respectively, in  Definition~\ref{def:der-base}. We have $\kflat{(\ern \lor)}$ and $\kflat{\efq}$ to take the form of `rules' (as opposed to an `axiom' like the others) because we require the conclusion to be broader than $\Xi$ so that we can simulate the semantic clause for the rule. 


Relative to this setup, we require the following lemmas: 
\begin{itemize}
    \item \textbf{AtComp}. \labelandtag{lem:CPL:basic-completeness}{AtComp} Let $\atset{P} \subseteq \setClosedAtom$ and $P \in \setClosedAtom$, and let $\base{B}$ be a base: 
    \[\atset{P} \supp_{\base{B}} P  \qquad \mbox{iff} \qquad \atset{P} \proves_{\base{B}} P
    \]
    \item \textbf{Flat}. \labelandtag{lem:CPL:flat-equivalence}{Flat}  For any $\base{A}' \basesup{\mathfrak{B}} \base{A}$,
    \[
    \supp_{\base{A}'} \kflat{\phi} \qquad \mbox{iff} \qquad  \supp_{\base{A}'} \phi
    \]
    holds when 
    \begin{itemize}
    \item $\base{A} = \base{K}$ and  $\basis{B} = \basis{C}$ if $\phi$ contains only $\bot, \to, \forall$ as logical signs, or 
    \item $\base{A} = \base{J}$ and  $\basis{B} = \basis{I}$ if $\phi$ is any formula.
    \end{itemize}
    \item \textbf{Nat}.  \labelandtag{lem:CPL:sharpening}{Nat} Let $\atset{P} = \kflat{\Delta}$ for some $\Delta \subseteq \setClosedFormulas$   and $P \in \setClosedAtom$:
    \[
    \mbox{if $\atset{P} \proves_{\base{K}} P$, then $\ksharpl{\atset{P}} \vdash_{\rn{C}} \ksharpr{P}$.}
    \]
     and 
    \[
    \mbox{if $\atset{P} \proves_{\base{J}} P$, then $\ksharpl{\atset{P}} \vdash_{\rn{I}} \ksharpr{P}$.}
    \]
\end{itemize}
The proofs of these lemmas are technical and uninformative so deferred to the end of this section.

\begin{theorem}[Completeness] \label{thm:CPL:completeness-support}
    Let $\Gamma$ be finite. If $\Gamma \supp_{} \phi$ with $\basis{B}=\basis{C}$ (resp. $\basis{B}=\basis{I}$), then $\Gamma \vdash_{\setClassical} \phi$ (resp. $\Gamma \vdash_{\setIntuitionistic} \phi$)
\end{theorem}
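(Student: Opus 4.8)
The plan is to assemble the three deferred lemmas into Sandqvist's simulation argument, so that the theorem becomes their point of convergence. Fix the sequent $\Gamma \seq \phi$ together with the encoding $\kflat{(-)}$, the eigenvariables $\setEigen$, the bijection $\alpha$, the decodings $\ksharpl{(-)}$ and $\ksharpr{(-)}$, and the natural base --- $\base{J}$ in the intuitionistic case and $\base{K}$ in the classical case. One first records the book-keeping point that $\base{K}$ consists only of zero- and first-level rules whereas $\base{J}$ may also carry second-level rules, so that $\base{K} \in \basis{C}$ and $\base{J} \in \basis{I}$; this legitimises instantiating the hypothesis $\Gamma \supp \phi$ at these bases and their extensions. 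Write $\atset{P} := \{\kflat{\gamma} \mid \gamma \in \Gamma\}$, a finite subset of $\setClosedAtom$ since $\Gamma$ is finite.

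The core step is to prove $\atset{P} \supp_{\base{J}} \kflat{\phi}$ (resp. $\atset{P} \supp_{\base{K}} \kflat{\phi}$) and then read it back as a derivation. For non-empty $\Gamma$ (the empty case being a direct specialisation), the clause (Inf) says precisely that every $\base{X} \basesup{\basis{B}} \base{J}$ validating all the atoms $\kflat{\gamma}$ also validates $\kflat{\phi}$. So I would take such an $\base{X}$, use \ref{lem:CPL:flat-equivalence} --- which holds at every extension of the natural base --- to pass from $\supp_{\base{X}} \kflat{\gamma}$ to $\supp_{\base{X}} \gamma$ for each $\gamma \in \Gamma$, instantiate $\Gamma \supp \phi$ at $\base{X}$ to obtain $\supp_{\base{X}} \phi$, and apply \ref{lem:CPL:flat-equivalence} once more to recover $\supp_{\base{X}} \kflat{\phi}$. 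This establishes $\atset{P} \supp_{\base{J}} \kflat{\phi}$; \ref{lem:CPL:basic-completeness} then converts it into the atomic derivability $\atset{P} \proves_{\base{J}} \kflat{\phi}$, and \ref{lem:CPL:sharpening} delivers $\ksharpl{\atset{P}} \vdash_{\rn{I}} \ksharpr{\kflat{\phi}}$ (and the corresponding $\vdash_{\rn{C}}$ statement in the classical base).

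It then remains to decode. Since the eigenvariables in $\setEigen$ occur in neither $\Gamma$ nor $\phi$, every $\gamma$ is an eigenvariable-free sentence, and the defining conditions of $\ksharpl{(-)}$ give $\ksharpl{\kflat{\gamma}} = \gamma$, so $\ksharpl{\atset{P}} = \Gamma$; when $\phi$ is a closed eigenvariable-free formula we likewise have $\ksharpr{\kflat{\phi}} = \phi$, and the chain closes to $\Gamma \vdash \phi$. Formulae carrying free variables are absorbed through the (wff) convention, whereby support of an open formula is support of its universal closure; the deliberate asymmetry between $\ksharpl{(-)}$ (the universal closure) and $\ksharpr{(-)}$ (the open reading) is engineered exactly so that $\ern\forall$ on the left, and $\gen$ together with the (wff) clause on the right, realign the decoded judgment with $\Gamma \seq \phi$. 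This bookkeeping is routine once the clauses defining the two decodings are unwound.

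The genuine extra work, and where I expect the main obstacle, lies in the classical case. The base $\base{K}$ carries only $(\kflat{K})$, $(\kflat{S})$, $\kflat{(\ern\forall)}$, $\kflat{\mp}$, $\kflat{\gen}$ and $\kflat{(\dne)}$, so the classical instance of \ref{lem:CPL:flat-equivalence} is available only for formulae built from $\bot, \to, \forall$. To obtain completeness for arbitrary classical $\phi$ I would first translate $\Gamma$ and $\phi$ into this fragment via the classical definitions of $\land, \lor, \exists$: by Theorem~\ref{thm:soundness} each definitional equivalence is valid, so $\Gamma \supp \phi$ transports to the translated sequent, the argument above applies to it unchanged, and the same provable equivalences carry the resulting derivation back to $\Gamma \vdash_{\rn{C}} \phi$. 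Verifying that this translation meshes cleanly with the encoding and decoding apparatus --- rather than the essentially mechanical assembly needed in the intuitionistic case --- is the part demanding the most care.
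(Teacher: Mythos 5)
Your proposal assembles the same three lemmas in the same order as the paper's proof: reduce to closed formulae by the (wff) convention, pass from $\Gamma \supp \phi$ to $\kflat{\Gamma} \supp_{\base{A}} \kflat{\phi}$ via \ref{lem:CPL:flat-equivalence}, convert that to atomic derivability $\kflat{\Gamma} \proves_{\base{A}} \kflat{\phi}$ by \ref{lem:CPL:basic-completeness}, and decode to a Hilbert derivation by \ref{lem:CPL:sharpening}; your explicit unfolding of (Inf) at arbitrary extensions $\base{X} \basesup{\basis{B}} \base{A}$ is exactly the reading the paper compresses into one sentence, and is why \ref{lem:CPL:flat-equivalence} is stated for all extensions of the natural base.

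The one point where you genuinely diverge is the classical case, and there your version is the more careful one. You correctly note that \ref{lem:CPL:flat-equivalence} with $\base{A}=\base{K}$, $\basis{B}=\basis{C}$ is only available for formulae over $\{\bot,\to,\forall\}$: the rules $\kflat{(\ern\lor)}$ and $\kflat{(\ern\exists)}$ are second-level, hence barred from any base in $\basis{C}$, and $\base{K}$ by definition omits the entire second section of Figure~\ref{fig:simulation}. You therefore detour through a translation of $\Gamma \seq \phi$ into that fragment, transporting validity forward via Theorem~\ref{thm:soundness} applied to the definitional equivalences (together with closure of $\supp$ under modus ponens and cut, which (Inf) provides), and transporting the derivation back by the same provable equivalences. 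The paper's own proof does none of this: it applies \ref{lem:CPL:flat-equivalence} to arbitrary $\Gamma$ and $\phi$ in both the classical and intuitionistic cases, which in the classical case exceeds the lemma's stated scope. So your extra step is not overhead but a repair --- it is what makes the classical half of the theorem actually follow from the lemmas as stated --- at the price of verifying that the translation meshes with the encoding and that the semantic transport (a cut property of $\supp$ nowhere isolated in the paper) holds; both checks go through.
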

\begin{proof}
Assume $\Gamma \supp_{} \phi$ with $\basis{B}=\basis{C}$ (resp. $\basis{B}=\basis{I}$).
Without loss of generality by \ref{cl:wff}, $\Gamma$ and $\phi$ comprise closed formulae. Let $\kflat{(-)}$ be a flattening operator and let $\base{A}=\base{K}$ (resp. $\base{A}=\base{J}$ be the associated simulation base. By~\ref{lem:CPL:flat-equivalence}, $\kflat{\Gamma} \supp_{\base{A}} \kflat{\phi}$. Hence, by~\ref{lem:CPL:basic-completeness}, $\kflat{\Gamma} \proves_{\base{A}} \kflat{\phi}$. Whence, by~\ref{lem:CPL:sharpening}, $\ksharpl{(\kflat{\Gamma})} \vdash_{\setClassical} \ksharpr{(\kflat{\phi})}$ (resp. $\ksharpl{(\kflat{\Gamma})} \vdash_{\setIntuitionistic} \ksharpr{(\kflat{\phi})}$). That is, $\Gamma \vdash_{\setClassical} \phi$ (resp. $\Gamma \vdash_{\setIntuitionistic} \phi$), as required. 
\end{proof}

\begin{lemma}[\ref{lem:CPL:basic-completeness}]
    Let $\atset{P} \subseteq \setClosedAtom$ and $P \in \setClosedAtom$, and let $\base{B}$ be a base: 
    \[\atset{P} \supp_{\base{B}} P \qquad \mbox{iff} \qquad \atset{P} \proves_{\base{B}} P
    \]
\end{lemma}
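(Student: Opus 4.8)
The plan is to reduce both directions of the biconditional to Proposition~\ref{prop:CPL:atomic-cut} by pure definitional unfolding, since everything here lives at the atomic level and no induction on logical weight is required. I would first record that derivability in an atomic system is finitary: any witness of $\atset{P} \proves_{\base{B}} P$ is a finite unfolding of Definition~\ref{def:der-base} and so mentions only finitely many members of $\atset{P}$. This lets me concentrate on the case where $\atset{P}$ is finite and recover the general case at the end.

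First I would dispose of the degenerate case $\atset{P} = \emptyset$, in which $\atset{P} \supp_{\base{B}} P$ is just $\supp_{\base{B}} P$; by (At) this is literally $\proves_{\base{B}} P$, i.e.\ $\atset{P} \proves_{\base{B}} P$. For $\atset{P}$ finite and nonempty, unfolding the clause (Inf) turns $\atset{P} \supp_{\base{B}} P$ into the statement that for every $\base{C} \basesup{\basis{B}} \base{B}$, if $\supp_{\base{C}} Q$ for all $Q \in \atset{P}$ then $\supp_{\base{C}} P$. Because each such $\base{C}$ is itself a base, I may apply (At) pointwise to every premiss $\supp_{\base{C}} Q$ and to the conclusion $\supp_{\base{C}} P$, rewriting the condition as
\[
\text{for all } \base{C} \basesup{\basis{B}} \base{B}: \text{ if } \proves_{\base{C}} Q \text{ for all } Q \in \atset{P}, \text{ then } \proves_{\base{C}} P.
\]
This is precisely the right-hand side of Proposition~\ref{prop:CPL:atomic-cut} with its first set of atoms instantiated to $\atset{P}$ and its second set taken empty, so that $\atset{P} \proves_{\base{C}} P$ collapses to $\proves_{\base{C}} P$. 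The proposition then gives that this condition is equivalent to $\atset{P} \proves_{\base{B}} P$, establishing both directions at once.

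It remains to remove the finiteness restriction, which I regard as the only genuinely fiddly point. For this I would use the canonical extension $\base{C}^\ast := \base{B} \cup \{\, \Rightarrow Q \mid Q \in \atset{P} \,\}$, obtained by adjoining each atom of $\atset{P}$ as a zero-level fact. Adding zero-level rules preserves membership in both $\basis{C}$ and $\basis{I}$, so $\base{C}^\ast \basesup{\basis{B}} \base{B}$, and by construction $\proves_{\base{C}^\ast} Q$ for every $Q \in \atset{P}$. If $\atset{P} \supp_{\base{B}} P$, then after the translation above we obtain $\proves_{\base{C}^\ast} P$; the finitariness remark then isolates a finite $\atset{P}_0 \subseteq \atset{P}$ with $\atset{P}_0 \proves_{\base{B}} P$, whence $\atset{P} \proves_{\base{B}} P$ by monotonicity of $\proves_{\base{B}}$ in its context. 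The converse direction is immediate from the finite case together with the same monotonicity. The principal obstacle is thus purely bookkeeping --- correctly matching the quantifier over base-extensions in (Inf) with that in Proposition~\ref{prop:CPL:atomic-cut}, and checking that the auxiliary base $\base{C}^\ast$ stays inside the relevant basis --- rather than anything requiring new mathematical ideas.
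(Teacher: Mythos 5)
Your proof is correct and follows essentially the same route as the paper: unfold (Inf), rewrite every judgment pointwise with (At), and invoke Proposition~\ref{prop:CPL:atomic-cut} with its second atom-set instantiated to $\emptyset$. The only divergence is that the paper silently assumes $\atset{P}$ finite (it writes $\atset{P} = \{p_1, \ldots, p_n\}$, as Proposition~\ref{prop:CPL:atomic-cut} requires, and as suffices for Theorem~\ref{thm:CPL:completeness-support} where $\Gamma$ is finite), whereas you also treat the empty and infinite cases; your extra compactness bookkeeping via $\base{B} \cup \{\Rightarrow Q \mid Q \in \atset{P}\}$ is sound but goes beyond what the paper does or needs.
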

\begin{proof}
        Let us assume that $\atset{P} = \{p_1, \dots, p_n \}$. We reason as follows:
        \begin{align}
      \atset{P} \supp_{\baseB} P \qquad &\mbox{iff} \qquad \text{$\forall \baseB \basesup{\mathfrak{B}} \baseX$, if $\supp_{\baseX} P_1, \dots, \supp_{\baseX} P_n$, then $\supp_{\baseX} P$} \tag{Inf}\\
        & \mbox{iff} \qquad \text{$\forall \baseX \basesup{\mathfrak{B}} \baseB$, if $\proves_{\baseX} P_1, \dots, \proves_{\baseX} P_n$, then $\proves_{\baseX} P$} \tag{At} \\
        \qquad & \mbox{iff} \qquad  \atset{P} \proves_{\baseB} P \tag{Proposition~\ref{prop:CPL:atomic-cut}}
        \end{align}
\end{proof}


\begin{proposition}\label{prop:CPL:flat-sim-clauses}
    The following hold for any $\base{A}' \basesup{\mathfrak{B}} \base{A}$ where except in the (vii) all formulae are closed:
    \begin{enumerate}[label=(\roman*)]
        \item $\proves_{\base{A}'} \kflat{\bot}$ iff $\proves_{\base{A}'} P$ for any $P \in \setClosedAtom$
        \item $\proves_{\base{A}'} \kflat{(\phi \land \psi)}$ iff $\proves_{\base{A}'} \kflat{\phi}$  and $\proves_{\base{A}'} \kflat{\psi}$ (where $\phi$ and $\psi$ do not contain eigenvariables). 
        \item $\proves_{\base{A}'} \kflat{(\phi \lor \psi)}$ iff, for any $\base{A}'' \basesup{\mathfrak{B}} \base{A}$ and $P \in \setClosedAtom$, if $\phi \proves_{\base{A}''} P$  and $\psi \proves_{\base{A}''} P$, then $\proves_{\base{A}''} P$.
        \item $\proves_{\base{A}'} \kflat{(\phi \to \psi)}$ iff $\kflat{\phi} \proves_{\base{A}'} \kflat{\psi}$ (where $\phi$ does not contain eigenvariables).
        \item $\proves_{\base{A}'} \kflat{(\forall x \phi)}$ iff $ \proves_{\base{A}'} \kflat{(\phi[x \mapsto t])}$ for any $t \in \setClosedTerm$
        \item $\proves_{\base{A}'} \kflat{(\exists x \phi)}$ iff, for any $\base{A}'' \basesup{\mathfrak{B}} \base{A}'$ and $P \in \setClosedAtom$, if $\kflat{(\phi[x \mapsto t])} \proves_{\base{A}'} P$ for any $t \in \setClosedTerm$, then $ \proves_{\base{A}'} P$ (where $\phi$ does not contain eigenvariables).
        \item $\proves_{\base{A}'} \kflat{\chi}$ iff $ \proves_{\base{A}'} \kflat{(\chi[x \mapsto t])}$ for any $t \in \setClosedTerm$.
    \end{enumerate}
\end{proposition}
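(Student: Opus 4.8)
The proposition is a catalogue of biconditionals, one for each principal connective, each of which transcribes the corresponding support clause of Figure~\ref{fig:support} into a statement about atomic derivability in the simulation base. The plan is therefore to fix $\base{A}' \basesup{\basis{B}} \base{A}$ and argue by cases on the outermost logical sign, in each case reading off the relevant instances of the atomic rules of Figure~\ref{fig:simulation} and unwinding \textsc{ref} and \textsc{app} from Definition~\ref{def:der-base}. All arguments are carried out over $\base{J}$ and $\basis{I}$; the classical clauses are exactly the restriction to formulae built from $\bot, \to, \forall$, verified from the corresponding subset of rules. Two patterns recur: connectives whose simulation is purely axiomatic ($\bot$, $\land$, $\to$, $\forall$, and the substitution clause (vii)) are handled by chaining the flattened axioms with the modus-ponens rule $\kflat{(\mp)}$; the connectives carried by second-level rules ($\lor$ and $\exists$) are handled by a direct rule application in one direction and a characteristic instantiation in the other.

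The second-level clauses are the cleanest. For the forward half of (iii), I would apply $\kflat{(\ern{\lor})}$ directly: its three premises are precisely $\kflat{\phi} \proves_{\base{A}''} P$, $\kflat{\psi} \proves_{\base{A}''} P$, and $\proves_{\base{A}''} \kflat{(\phi \lor \psi)}$, the last following by monotonicity of $\proves$ under base-extension. For the converse, I would instantiate the universally quantified atom $P$ with $\kflat{(\phi \lor \psi)}$ itself and take $\base{A}'' = \base{A}'$: the two hypotheses then hold by $\kflat{(\irn{\lor}_1)}$ and $\kflat{(\irn{\lor}_2)}$ followed by $\kflat{(\mp)}$, so the assumed implication delivers $\proves_{\base{A}'} \kflat{(\phi \lor \psi)}$. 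Clause (vi) is the same \emph{mutatis mutandis}, with $P := \kflat{(\exists x \phi)}$ and the rules $\kflat{(\irn{\exists})}$, $\kflat{(\ern{\exists})}$. Clause (i), over $\base{J}$, is $\kflat{(\efq)}$ forwards and trivial backwards (take $P := \kflat{\bot}$), clause (ii) follows from $\kflat{(\irn{\land})}$, $\kflat{(\ern{\land}_1)}$, $\kflat{(\ern{\land}_2)}$ with $\kflat{(\mp)}$, and the easy halves of (iv) and (v) are again modus ponens: $\proves_{\base{A}'} \kflat{(\phi \to \psi)}$ together with the reflexive $\kflat{\phi} \proves_{\base{A}'} \kflat{\phi}$ yields $\kflat{\phi} \proves_{\base{A}'} \kflat{\psi}$ via $\kflat{(\mp)}$, and each instance of (v) comes from $\kflat{(\ern{\forall})}$ and $\kflat{(\mp)}$.

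The substance lies in the converse directions, and this is exactly where the choice of a Hilbert rather than a Gentzen presentation does the work. The passage in (iv) from $\kflat{\phi} \proves_{\base{A}'} \kflat{\psi}$ to $\proves_{\base{A}'} \kflat{(\phi \to \psi)}$ is an \emph{internalised deduction theorem}: I would induct on the \textsc{app}-derivation witnessing $\kflat{\phi} \proves_{\base{A}'} \kflat{\psi}$ and establish that $\kflat{(\phi \to \theta)}$ is derivable for each atom $\kflat{\theta}$ occurring in it, mirroring the usual Hilbert argument (cf. Proposition~\ref{prop:deduction-theorem}): the reflexivity step uses the identity combinator $\kflat{(\phi \to \phi)}$ built from $\kflat{(\textsc{K})}$ and $\kflat{(\textsc{S})}$, steps not using the hypothesis are lifted by $\kflat{(\textsc{K})}$, and modus-ponens steps are lifted by $\kflat{(\textsc{S})}$; the side-condition that $\phi$ carries no eigenvariables ensures the decoding stays coherent. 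For the converse of (v) and for the eigenvariable clause (vii), I would use the freshness of $\alpha(x)$: the instance at $t = \alpha(x)$ gives $\proves_{\base{A}'} \kflat{(\phi[x \mapsto \alpha(x)])}$, to which $\kflat{(\gen)}$ applies --- after prefixing a provable antecedent $\xi$ by $\kflat{(\textsc{K})}$ and removing it again by $\kflat{(\mp)}$ --- to produce $\kflat{(\forall x \phi)}$; clause (vii) is this construction read through the encoding \ref{def:eigen}, its remaining half being immediate on instantiating $t := \alpha(x)$.

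The main obstacle is the deduction-theorem direction of (iv). Since the framework forbids discharge, there is no primitive implication-introduction rule to invoke, so the entire argument must be reconstructed from the combinatorial axioms $\kflat{(\textsc{K})}$ and $\kflat{(\textsc{S})}$ inside the atomic system, and one must check that every atom generated along the derivation is genuinely of the form $\kflat{(\cdots)}$ so that the needed instances of these axioms are present in $\base{A}'$. The secondary difficulty, concentrated in (v)--(vii), is the disciplined tracking of eigenvariables through $\kflat{(\gen)}$: one must ensure generalisation is taken only over the fresh constant $\alpha(x)$ and that the rule's side-condition $x \notin \FV(\xi)$ is respected. This eigenvariable bookkeeping is the genuinely first-order ingredient absent from Sandqvist's propositional proof.
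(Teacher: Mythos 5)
Your proposal is correct and follows essentially the same route as the paper's proof: the same rule-chasing (via $\kflat{(\mp)}$ and monotonicity) for the forward directions, the same characteristic instantiations ($P := \kflat{(\phi \lor \psi)}$, resp.\ $P := \kflat{(\exists x \phi)}$, with $\base{A}'' = \base{A}'$) for the converses of (iii) and (vi), and the same eigenvariable-plus-$\kflat{(\gen)}$ construction with a detachable provable antecedent for the converses of (v) and (vii). The only divergence is at the deduction-theorem direction of (iv): you reconstruct it inside the base via $\kflat{(\textsc{K})}$/$\kflat{(\textsc{S})}$ lifting (and rightly flag the bookkeeping about atoms outside the image of $\kflat{(-)}$), whereas the paper simply defers this step to the standard literature, so your sketch is, if anything, more explicit than the paper's own argument.
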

\begin{proof}
We show each claim separately and divide each claim into its two directions: 
\begin{enumerate}[label=(\roman*)]
\item Absurdity ($\bot$):
   \begin{itemize}
      \item  LHS $\Longrightarrow$ RHS. Since $\proves_{\base{A}'} \kflat{\bot}$, it follows from $\kflat{\efq}$ that  $\proves_{\base{A}'} P$ for any $P \in \setClosedAtom$. 

      \item RHS $\Longrightarrow$ LHS. Since $\proves_{\base{A}'} P$ for any $P \in \setClosedAtom$ and $\kflat{\bot} \in \setClosedAtom$, the desired result obtains \emph{a fortiori}.
      \end{itemize}
      \item Conjunction ($\land$):
       \begin{itemize}
\item  LHS $\Longrightarrow$ RHS. Immediate by $\kflat{\ern{\land}_1}$ and $\kflat{\ern{\land}_2}$ and $\kflat{\mp}$.  
\item  RHS $\Longrightarrow$ LHS. Immediate by $\kflat{\irn{\land}}$ and $\kflat{\mp}$. 
\end{itemize}
      \item Disjunction ($\lor$):  
             \begin{itemize}
\item  LHS $\Longrightarrow$ RHS.           Immediate by $\kflat{\ern{\lor}}$ using Proposition~\ref{prop:monotonicity}.
\item  RHS $\Longrightarrow$ LHS. By $\kflat{(\irn \lor_1)}$ and $\kflat{(\irn \lor_2)}$, observe $\kflat{\phi} \proves_{\base{A}'} \kflat{(\phi \lor \psi)}$ and $\kflat{\phi} \proves_{\base{A}'} \kflat{(\phi \lor \psi)}$. Thus, the  desired result obtains from the assumption by choosing $\base{A}''=\base{A}'$ and $P = \kflat{(\phi \lor \psi)}$.
\end{itemize}
\item Implication ($\to$):
    \begin{itemize}
\item  LHS $\Longrightarrow$ RHS. Let $\base{A}'' \basesup{\mathfrak{B}} \base{A}'$ be such that $\proves_{\base{A}''} \kflat{\phi}$.  
From LHS, it follows that $\proves_{\base{A}''}\kflat{(\phi \to \psi)}$ by Proposition~\ref{prop:monotonicity}. Hence (by $\kflat{\mp}$),  $\proves_{\base{A}''} \kflat{\psi}$. The desired result follows from Proposition~\ref{prop:CPL:atomic-cut} since $\base{A}'' \basesup{\mathfrak{B}} \base{A}'$ was arbitrary.
\item  RHS $\Longrightarrow$ LHS. Follows from standard approaches to the Deduction Theorem for classical logic --- see, for example, Herbrand~\cite{herbrand1930recherches}. 
\end{itemize}
\item Universal Quantifier ($\forall$):
\begin{itemize} 
\item  LHS $\Longrightarrow$ RHS. Since $\proves_{\base{A}'} \kflat{(\forall x \phi)}$, the desired result follows from $\kflat{\mp}$ and $\kflat{(\ern \forall)}$.  

\item  RHS $\Longrightarrow$ LHS. 
Let $\top$ be an arbitrary formula such that $\proves_{\base{A}'} \kflat{\top}$ --- for example, $\top = \bot \to (\bot \to \bot)$. From RHS, infer $\top \proves_{\base{A}'} \kflat{(\forall x \phi)}$. By (iv), observe $ \proves_{\base{A}'} \kflat{(\top \to \forall x \phi)}$. Let $a \in \setEigen$ be arbitrary. From RHS, $\proves_{\base{A}'}\kflat{(\phi[x \mapsto a])}$ for any eigenvariable $a \in \setEigen$. From this, infer $\kflat{\top} \proves_{\base{A}'}\kflat{(\phi[x \mapsto a])}$ (by Proposition~\ref{prop:CPL:atomic-cut} and Proposition~\ref{prop:monotonicity}). By (iv), we have $\proves_{\base{A}'}\kflat{(\top \to \phi[x \mapsto a])}$. Hence, applying $\irn \forall$, $\proves_{\base{A}'}\kflat{(\top \to \forall x\phi)}$. Whence (by $\kflat{\mp}$), $\proves_{\base{A}'}\kflat{(\forall x\phi)}$, as required.
\end{itemize}
\item Existential Quantifier ($\exists$):
\begin{itemize}
     \item LHS $\Longrightarrow$ RHS. Let $\base{A}'' \basesup{\basis{B}} \base{A}$ be arbitrary such that $\kflat{(\phi[x \mapsto t])} \proves_{\base{A}''} P$ for any $t \in \setClosedTerm$ and $P \in \setClosedAtom$. By Proposition~\ref{prop:monotonicity}, observe $\proves_{\base{A}''} \kflat{(\exists x \phi)}$. It now follows by $\kflat{(\ern \exists)}$ that $\proves_{\base{A}
     ''} P$, as required. 
    \item RHS $\Longrightarrow$ LHS. Observe that $\kflat{(\phi[x \mapsto t])} \proves_{\base{A}'} \kflat{(\exists x \phi)}$ for any $t \in \setClosedAtom$ by $\kflat{(\irn \exists)}$ and $\kflat{\mp}$. The desired result obtains from instantiating RHS with $\base{A}'' = \base{A}'$ and $P = \kflat{(\exists \phi)}$.
\end{itemize}
\item Open:
     \begin{itemize}
\item  LHS $\Longrightarrow$ RHS. Let $\{x_1,\ldots,x_n\} = \FV(\psi)$. By (\ref{def:eigen}), we have from the assumption that $\proves_{\base{A}} \kflat{(\psi[x_1 \mapsto a_1]\ldots[x_n \mapsto a_n])}$, for some $a_1,\ldots,a_n \in \setEigen$. Hence, by $\kflat{\gen}$ applied $n$-times,  $\proves_{\base{A}} \kflat{\uniclose{\psi}}$. Whence, by $\kflat{(\forall)}$ and $\kflat{\mp}$ applied $n$-times, $\proves_{\base{A}} \kflat{\psi[x \mapsto t]}$, as required. 

\item  RHS $\Longrightarrow$ LHS. By (\ref{def:eigen}), $\kflat{\psi} = \kflat{(\psi[x \mapsto a])}$ for some $a \in \setEigen$. Since $a \in \setClosedTerm$, the result obtains \emph{a fortiori}.
\end{itemize}
\end{enumerate}
\end{proof}


\begin{lemma}[\ref{lem:CPL:flat-equivalence}]
     For any $\base{A}' \basesup{\mathfrak{B}} \base{A}$,
    \[
    \supp_{\base{A}'} \kflat{\phi} \qquad \mbox{iff} \qquad  \supp_{\base{A}'} \phi
    \]
    holds when 
    \begin{itemize}
    \item $\base{A} = \base{K}$ and  $\basis{B} = \basis{C}$ if $\phi$ contains only $\bot, \to, \and$, and $\forall$ as logical signs, or 
    \item $\base{A} = \base{J}$ and  $\basis{B} = \basis{I}$ if $\phi$ is any formula.
    \end{itemize}
\end{lemma}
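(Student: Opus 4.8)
The plan is to argue by \emph{semantic} induction on $\phi$ (on the logical weight $w$), crucially exploiting the fact that the lemma is stated uniformly for \emph{every} $\base{A}' \basesup{\mathfrak{B}} \base{A}$, so that the induction hypothesis is available at every further extension. The whole argument rests on two bridges between the two sides of the biconditional. First, since $\kflat{\phi}$ is by construction a closed atom, clause (At) gives $\supp_{\base{A}'} \kflat{\phi}$ iff $\proves_{\base{A}'} \kflat{\phi}$; hence it suffices to relate $\supp_{\base{A}'} \phi$ with $\proves_{\base{A}'} \kflat{\phi}$. Second, \ref{lem:CPL:basic-completeness} (together with Proposition~\ref{prop:CPL:atomic-cut}, on which it rests) lets me freely convert an atomic support judgement $\atset{P} \supp_{\base{C}} P$ into the derivability judgement $\atset{P} \proves_{\base{C}} P$. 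With these in hand, each inductive case reduces to composing the relevant clause of Figure~\ref{fig:support} with the matching item of Proposition~\ref{prop:CPL:flat-sim-clauses}.

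For the base case $\phi \in \setAtoms$ I would arrange, without loss of generality, that $\kflat{(-)}$ acts as the identity on the closed atoms of $\Xi$ (reserving the fresh atoms for images of complex subformulae), so that $\kflat{\phi} = \phi$ and the equivalence is immediate. For $\phi = \bot$, clause $(\bot)$ unfolds $\supp_{\base{A}'} \bot$ to ``$\supp_{\base{A}'} P$ for all $P \in \setClosedAtom$'', which by (At) is ``$\proves_{\base{A}'} P$ for all $P$'', and Proposition~\ref{prop:CPL:flat-sim-clauses}(i) identifies this with $\proves_{\base{A}'} \kflat{\bot}$. For $\phi = \psi \land \chi$, clause $(\land)$ and the induction hypothesis give $\supp_{\base{A}'} \psi \land \chi$ iff $\proves_{\base{A}'} \kflat{\psi}$ and $\proves_{\base{A}'} \kflat{\chi}$, which Proposition~\ref{prop:CPL:flat-sim-clauses}(ii) collapses to $\proves_{\base{A}'} \kflat{(\psi \land \chi)}$; the eigenvariable-freeness side condition holds because $\psi, \chi$ are subformulae of the original $\Gamma, \phi$. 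The case $\phi = \forall x \psi$ is analogous, using $(\forall)$ and Proposition~\ref{prop:CPL:flat-sim-clauses}(v) and applying the induction hypothesis to each instance $\psi[x \mapsto t]$ with $t \in \setClosedTerm$.

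The case $\phi = \psi \to \chi$ is the first where the extension quantifier is active: by $(\to)$ and (Inf), $\supp_{\base{A}'} \psi \to \chi$ asserts that every $\base{A}'' \basesup{\mathfrak{B}} \base{A}'$ supporting $\psi$ supports $\chi$; invoking the induction hypothesis at each such $\base{A}''$ rewrites this as $\kflat{\psi} \supp_{\base{A}'} \kflat{\chi}$, which by \ref{lem:CPL:basic-completeness} equals $\kflat{\psi} \proves_{\base{A}'} \kflat{\chi}$, and Proposition~\ref{prop:CPL:flat-sim-clauses}(iv) identifies this with $\proves_{\base{A}'} \kflat{(\psi \to \chi)}$. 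The intuitionistic-only cases $\phi = \psi \lor \chi$ and $\phi = \exists x \psi$ are where I expect the main difficulty, since their support clauses nest two layers of extension quantifiers. Here I would unfold $(\lor)$ (resp.\ $(\exists)$), and for each candidate extension $\base{C} \basesup{\mathfrak{B}} \base{A}'$ use the induction hypothesis together with \ref{lem:CPL:basic-completeness} to turn the inner premisses $\psi \supp_{\base{C}} P$ and $\chi \supp_{\base{C}} P$ (resp.\ $\psi[x \mapsto t] \supp_{\base{C}} P$ for all $t$) into atomic derivability statements $\kflat{\psi} \proves_{\base{C}} P$ and $\kflat{\chi} \proves_{\base{C}} P$ (resp.\ $\kflat{(\psi[x \mapsto t])} \proves_{\base{C}} P$), and the conclusion $\supp_{\base{C}} P$ into $\proves_{\base{C}} P$; Proposition~\ref{prop:CPL:flat-sim-clauses}(iii) (resp.\ (vi)) then matches the resulting condition exactly with $\proves_{\base{A}'} \kflat{(\psi \lor \chi)}$ (resp.\ $\proves_{\base{A}'} \kflat{(\exists x \psi)}$).

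The delicate point throughout is that the induction hypothesis must be applied at \emph{arbitrary} extensions $\base{C} \basesup{\mathfrak{B}} \base{A}'$ rather than at $\base{A}'$ itself --- precisely what the uniform quantification over $\base{A}'$ in the statement secures --- and that one keeps the quantifier ranges of clauses $(\lor)/(\exists)$ aligned with those of Proposition~\ref{prop:CPL:flat-sim-clauses}(iii)/(vi) so that the two conditions coincide rather than merely imply one another. I expect this bookkeeping of nested base extensions, not any single step, to be the real labour of the proof.
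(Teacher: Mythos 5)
Your proposal is correct and takes essentially the same approach as the paper: a semantic induction on $\phi$ in which each case is settled by composing the relevant clause of Figure~\ref{fig:support} with (At), \ref{lem:CPL:basic-completeness}, and the matching item of Proposition~\ref{prop:CPL:flat-sim-clauses}, with the induction hypothesis invoked at arbitrary extensions $\base{A}'' \basesup{\mathfrak{B}} \base{A}'$ exactly as you describe. The paper's proof merely writes out the two quantifier cases explicitly and defers the propositional cases to Sandqvist, so your sketch is, if anything, more detailed than the original.
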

\begin{proof}
    We proceed by semantic induction on $\phi$. In all cases, the result follows from the use of Proposition~\ref{prop:CPL:flat-sim-clauses}. We show the cases for the quantifiers, the others being similar to work by Sandqvist~\cite{Sandqvist2015IL}:
    \begin{itemize}
        \item $\phi = \forall x \psi$. We reason as follows:
       \begin{align}
        \supp_{\base{A}'} \kflat{(\forall x \psi)} & \qquad \mbox{iff} \qquad \proves_{\base{A}'} \kflat{(\forall x \psi)} \tag*{(At)} \\
        & \qquad \mbox{iff} \qquad  
        \mbox{$\proves_{\base{A}'} 
 \kflat{(\psi[x \mapsto t])}$ for any $t \in \setClosedTerm$} \tag*{(Prop.~\ref{prop:CPL:flat-sim-clauses})} \\
         & \qquad \mbox{iff} \qquad   
         \mbox{$\supp_{\base{A}'} (\psi[x \mapsto t])$ for any $t \in \setClosedTerm$}  \tag*{(IH)} \\
        & \qquad \mbox{iff} \qquad   
        \supp_{\base{A}'} 
 \forall x \psi \tag*{($\forall$)} 
        \end{align}
        \item $\phi = \exists x \psi$. We reason as follows:
        \begin{align}
        \supp_{\base{A}'} \kflat{(\exists x \psi)} & \qquad \mbox{iff} \qquad \proves_{\base{A}'} \kflat{(\exists x \psi)} \tag*{(At)} \\
        & \qquad \mbox{iff} \qquad  \mbox{for any $\base{A''} \basesup{\basis{B}} \base{A}$ and $P \in \setClosedAtom$,} \tag*{(Prop.~\ref{prop:CPL:flat-sim-clauses})}\\ 
       & \hspace{17mm} \mbox{if $\kflat{(\psi[x \mapsto t])} \proves_{\base{A}''} P 
$ for any $t \in \setClosedTerm$, then $\proves_{\base{A}''} P$} \notag \\
         & \qquad \mbox{iff} \qquad  \mbox{for any $\base{A''} \basesup{\basis{B}} \base{A}$ and $P \in \setClosedAtom$,} \tag*{(IH)}\\ 
       & \hspace{17mm} \mbox{if $\kflat{(\psi[x \mapsto t])} \supp_{\base{A}''} P 
$ for any $t \in \setClosedTerm$, then $\supp_{\base{A}''} P$} \notag  \\
        & \qquad \mbox{iff} \qquad   
        \supp_{\base{A}'} 
 \exists x \psi \tag*{($\exists$)} 
        \end{align}
    \end{itemize}
    This completes the induction.
\end{proof}

\begin{lemma}[\ref{lem:CPL:sharpening}]
Let $\atset{P} = \kflat{\Delta}$ for some $\Delta \subseteq \setClosedFormulas$ not containing eigenvariables,  and $P \in \setClosedAtom$:
    \[
    \mbox{ if $\atset{P} \proves_{\base{K}} P$, then $\ksharpl{\atset{P}} \vdash_{\rn{C}} \ksharpr{P}$.}
    \]
     and 
    \[
    \mbox{ if $\atset{P} \proves_{\base{J}} P$, then $\ksharpl{\atset{P}} \vdash_{\rn{I}} \ksharpr{P}$.}
    \]
\end{lemma}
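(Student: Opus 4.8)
The plan is to induct on the derivation witnessing $\atset{P} \proves_{\base{K}} P$ (resp. $\atset{P} \proves_{\base{J}} P$), as generated by the clauses \textsc{ref} and \textsc{app} of Definition~\ref{def:der-base}, and in the \textsc{app} step to case-split on which rule of Figure~\ref{fig:simulation} was applied. Each such rule is by construction the flattening of an axiom or of a primitive inference of the Hilbert--Frege calculus (Definition~\ref{def:consequence}), so the whole argument amounts to replaying the atomic-system derivation as a derivation in $\rn{C}$ (resp. $\rn{I}$), reading every atom $\kflat{\theta}$ back through $\ksharpl{(-)}$ on the left of the turnstile and through $\ksharpr{(-)}$ on the right. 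Before the case analysis I would strengthen the statement to carry the invariant that every assumption set occurring in a sub-derivation is again of the form $\kflat{\Delta'}$ with $\Delta' \subseteq \setClosedFormulas$ eigenvariable-free; this keeps $\ksharpl{\atset{P}}$ a set of closed, eigenvariable-free formulae throughout, on which $\ksharpl{(-)}$ acts as the identity decoding. The invariant is preserved because the only rules enlarging the assumption set are the discharging rules $\kflat{(\ern\lor)}$ and $\kflat{(\ern\exists)}$ --- present only in $\base{J}$, consistent with $\basis{I}$ alone admitting second-level rules --- whose discharged atoms $\kflat{\phi}, \kflat{\psi}, \kflat{(\phi[x \mapsto t])}$ are flattenings of members of $\Xi$ and hence closed and eigenvariable-free.

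The easy cases are then routine. The base case \textsc{ref} gives $P = \kflat{\delta}$ with $\delta$ closed and eigenvariable-free, so $\ksharpr{P} = \delta \in \ksharpl{\atset{P}}$ and \textsc{hypothesis} applies. Every zero-premise (axiom) rule has $P = \kflat{\theta}$ with $\theta$ an axiom instance, and $\ksharpr{P}$ decodes to an instance of the same scheme, derivable by \textsc{Axiom} and hence \emph{a fortiori} from $\ksharpl{\atset{P}}$; the rule $\kflat{(\dne)}$ is of this kind and is present only for $\base{K}$. The rule $\kflat{(\mp)}$ is closed by the induction hypothesis on its two premises and \textsc{Modus Ponens}; $\kflat{(\efq)}$ by Proposition~\ref{prop:efq}; and, in $\base{J}$, $\kflat{(\ern\lor)}$ by the induction hypothesis on its three premises together with Proposition~\ref{prop:or-elim} (whose classical counterpart likewise follows from the Deduction Theorem, Proposition~\ref{prop:deduction-theorem}).

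The delicate cases are the quantifier rules, and this is exactly where the asymmetry between $\ksharpl{(-)}$ and $\ksharpr{(-)}$ is used. For $\kflat{(\gen)}$, with premise $\kflat{(\xi \to \phi[x \mapsto \alpha(x)])}$ and conclusion $\kflat{(\xi \to \forall x \phi)}$, the induction hypothesis gives $\ksharpl{\atset{P}} \vdash_{\rn{C}} \ksharpr{\kflat{(\xi \to \phi[x \mapsto \alpha(x)])}}$; by the decoding clause $(\textsc{var}_2)$ the eigenvariable $\alpha(x)$ is returned to $x$, so the succedent is precisely $\xi \to \phi$. Since $x \not\in \FV(\xi)$ (the side condition on the rule) and $x \not\in \FV(\ksharpl{\atset{P}})$ (by the invariant), \textsc{Generalization} yields $\ksharpl{\atset{P}} \vdash_{\rn{C}} \xi \to \forall x \phi$, which is $\ksharpr{\kflat{(\xi \to \forall x \phi)}}$. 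The existential rule is dual: decoding its premises produces a derivation of $\ksharpr{P}$ from $\ksharpl{\atset{P}}$ and the opened instance $\phi[x \mapsto a]$ for the eigenvariable $a$ chosen for $x$, and one concludes with Proposition~\ref{prop:exists-elim}, whose freshness hypothesis is guaranteed by $a \in \setEigen$ not occurring in $\Gamma$ or $\phi$.

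I expect the main obstacle to be precisely the bookkeeping in these quantifier steps: one must verify that the eigenvariable introduced by $\kflat{(\gen)}$ (resp. the existential rule) decodes under $\ksharpr{(-)}$ to a variable meeting the not-free-in-context and freshness side conditions of \textsc{Generalization} and Proposition~\ref{prop:exists-elim}, and that $(\textsc{var}_1)$ and $(\textsc{var}_2)$ are applied on the correct sides of the turnstile --- assumptions being universally closed by $\ksharpl{(-)}$ while the succedent is left open by $\ksharpr{(-)}$. Once the invariant and these side conditions are discharged, the remaining cases are the mechanical mirroring of Hilbert rules sketched above.
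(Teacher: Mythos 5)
Your skeleton is the same as the paper's: induction on how $\atset{P} \proves_{\base{A}} P$ obtains, with \textsc{ref} closed by \textsc{Hypothesis}, zero-premise rules by \textsc{Axiom}, $\kflat{(\mp)}$ by \textsc{Modus Ponens}, $\kflat{(\gen)}$ by \textsc{Generalization}, and $\kflat{(\ern{\lor})}$, $\kflat{(\efq)}$, $\kflat{(\ern{\exists})}$ by Propositions~\ref{prop:or-elim}, \ref{prop:efq}, \ref{prop:exists-elim}. The genuine gap is the invariant you introduce to make the induction go through: it is false for $\base{J}$, and your own case analysis contradicts it. Members of $\Xi$ are closed but \emph{not} eigenvariable-free: by the paper's definition of subformula, the subformulae of $\forall x\psi$ and $\exists x\psi$ are \emph{all} instances $\psi[x \mapsto t]$ with $t \in \setClosedTerm$, and eigenvariables are constants in $\setEigen \subseteq \setCons$, hence closed terms; so $\psi[x \mapsto \alpha(y)] \in \Xi$. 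Moreover, in $\kflat{(\ern{\exists})}$ the parameter $t$ ranges over all of $\setTerm$, and your own treatment of that rule places the opened instance $\kflat{(\phi[x \mapsto a])}$ with $a \in \setEigen$ into the assumption set --- exactly an eigenvariable-containing atom. (For $\base{K}$ there are no assumption-extending rules, so your argument is fine for the classical half; the problem is confined to $\base{J}$.)

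The place where this breaks your proof is the \textsc{ref} case. Once an atom $P = \kflat{\phi}$ with $\phi$ containing eigenvariables (or open) occurs in $\atset{P}$, the two decodings disagree: $\ksharpl{P}$ is the universal closure $\uniclose{\psi}$ by ($\textsc{var}_1$), while $\ksharpr{P}$ is the open formula $\psi$ by ($\textsc{var}_2$); then $\ksharpr{P} \notin \ksharpl{\atset{P}}$ in general and \textsc{Hypothesis} does not suffice. This is precisely why the paper's \textsc{ref} case has a second branch: one derives $\ksharpr{P}$ from $\ksharpl{P} = \uniclose{\psi}$ using the axiom $(\ern{\forall})$ together with Proposition~\ref{prop:deduction-theorem} and \textsc{Modus Ponens}. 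Note also that you do not need your invariant for the \textsc{Generalization} step: as formulated in Definition~\ref{def:consequence} its side condition constrains only the antecedent $\xi$, and in any case $\ksharpl{(-)}$ returns universal closures, so the decoded context $\ksharpl{\atset{P}}$ consists of closed formulae no matter what $\atset{P}$ contains. So the repair is not to strengthen the lemma with an (unprovable) eigenvariable-freeness invariant, but to let the inner assumption sets contain encodings of eigenvariable-laden and open formulae and absorb the resulting mismatch in the \textsc{ref} case via $(\ern{\forall})$, as the paper does.
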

\begin{proof}
    We proceed by induction on how $\atset{P} \proves_{\base{A}} P$ obtains:
    \begin{itemize}[label=--]
        \item \textsc{ref}. If $P=\kflat{\phi}$ where $\phi$ is a closed formula not containing eigenvariables or $P$ is not in the image of $\kflat{(-)}$, then $\ksharpl{\atset{P}} \proves \ksharpr{P}$ obtains by \textsc{hypothesis} in Definition~\ref{def:consequence}. Otherwise, it obtains by $\ern{\forall}$ and Proposition~\ref{prop:deduction-theorem}.
        \item \textsc{app}$_1$. We have $\Rightarrow P \in \base{K}$ (resp. $\Rightarrow P \in \base{I}$). By construction of $\base{K}$ (resp. $\base{I}$), there is $\mathsf{A} \in \setClassical$ (resp.  $\mathfrak{a} \in \setIntuitionistic$) such that $\ksharpr{P} = \iota(\mathfrak{a})$ for some instantiation $\iota$. Hence, $\ksharpl{\atset{P}} \proves_{\setClassical} \ksharpr{P}$ (resp. $\ksharpl{\atset{P}} \proves_{\setIntuitionistic} \ksharpr{P}$) obtains by \textsc{axiom} in Definition~\ref{def:consequence}. 
        \item  \textsc{app}$_2$.  We have a rule $\rn{r} = \{\atset{P}_1 \Rightarrow P_1, \ldots, \atset{P}_n \Rightarrow P_n\} \Rightarrow P \in \base{A}$, where $\base{A} \in \{\base{K}, \base{J}\}$, such that $\atset{P}, \atset{P}_i \proves_\base{A} P_i$ for $i = 1, \ldots, n$. By the \emph{induction hypothesis} (IH), $\ksharpl{\atset{P}},\ksharpl{\atset{P}}_i \proves_\base{A} \ksharpr{P}_i$ for $i = 1, \ldots, n$. We proceed by case analysis on $\rn{r}$ to show $\ksharpl{\atset{P}} \Rightarrow \ksharpr{P}$:
        \begin{itemize} 
             \item $\rn{r} = \kflat{\mp}$.  We have $i=2$, with $P_1 = \kflat{\phi}$, $P_2 = \kflat{(\phi \to \psi)}$,  and $P = \kflat{\psi}$ for some formula $\phi$ and $\psi$. The result obtains from the IH by \textsc{modus ponens} in Definition~\ref{def:consequence}.
            \item $\rn{r} = \kflat{\gen}$. We have $i=1$ with $\atset{P}_1=\emptyset$, $P_1=\kflat{(\xi \to \phi[x\mapsto \alpha(x)])}$, and $P=\{\kflat{(\xi \to \forall x\phi)}\}$, such that $x \not \in \FV(\xi)$. By (\textsc{var}), $\ksharpr{P}_1 = \xi \to \phi$ and $\ksharpr{P} = \xi \to \forall x \phi$. Also since $\Delta \subseteq \setClosedFormulas$ and does not contain any eigenvariables,  we have $x \not \in \kflat{\Delta}$. The desired result obtains by \textsc{generalization}.
            \item $\rn{r} = \kflat{(\ern \lor)}$. We have $i=3$, with $\atset{P}_1 = \{\kflat{\phi}\}$, $\atset{P}_2 = \{\kflat{\psi}\}$, $\atset{P}_3 = \emptyset$, $P_1=P_2=P$, and $P_3= \kflat{(\phi \lor \psi)}$ for some formulas $\phi, \psi$, and  $P$ is any closed atom. The desired result obtains from the IH using Proposition~\ref{prop:or-elim}.
             \item $\rn{r} = \kflat{(\efq)}$. We have $i=1$, with $\atset{P}_1 = \emptyset$, $P_1 = \kflat{\bot}$, and $P$ is any closed atom. The desired result obtains from the IH using Proposition~\ref{prop:efq}. 
             \item  $\rn{r} = \kflat{(\ern \exists)}$. We have $i =2$, with $\atset{P}_1= \{\kflat{(\exists \phi)}\}$, $\atset{P}_2= \{\kflat{(\phi[x \mapsto t])}\}$ and $P$ is any closed atom. The desired result obtains from the IH using Proposition~\ref{prop:exists-elim}. 
        \end{itemize}
        This completes the case analysis.
    \end{itemize}
    This completes the induction.
\end{proof}

\section{Discussion} \label{sec:conclusion}

We have shown that first-order classical logic is sound and complete for the proof-theoretic semantics given by Sandqvist~\cite{Sandqvist2005inferentialist,Sandqvist2009CL}. Simultaneously, we have extended the result to first-order intuitionistic result. The proofs of soundness and completeness are constructive and do not rely on the extant model-theoretic readings of these logics. 

Curiously, the semantics for first-order classical and intuitionistic logic can be reduced to a difference in the notion of `base'. It is remarkable that a seemingly small change can have such precise and large consequences for which logic is expressed by the support relation. There are various notion of atomic systems in the literature that make a suitable notion of base according to various philosophical positions --- see, for example,  Piecha and Schroeder-Heister~\cite{Piecha2017definitional,Schroeder2016atomic} and Sandqvist~\cite{Sandqvist2015hypothesis}. 

The type of base comes up in showing  that $\dne$ holds --- that is, that $\supp \neg \neg \phi \to \phi$. Of course, $\dne$ may be replaced by some other laws such as Pierce's Law,
\[
((X \to Y) \to X) \to X \tag{$\rn{pl}$}
\]
We may then ask, `What is it about various notions of base that make them intuitively \emph{classical}?' Having answered that, we may substitute `classical' for other logics of interest --- intuitionistic, modal, relevant, linear, and so on.    

For example, following Sandqvist~\cite{sandqvistwld}, an alternate candidate for a 'classical' notion of base would be one containing rules of the form
\[
\infer{C}{P_1 & \ldots & P_m & \deduce{Q_1}{[\atset{Q}_1]} & \ldots & \deduce{Q_n}{[\atset{Q}_n]}}
\]
where $C,P_1,\ldots,P_m,Q_1,\ldots,Q_n \in \setClosedAtom$ and $\atset{Q}_1, \ldots, \atset{Q}_n \subseteq \setClosedAtom$ (finite and possibly empty).  

The observation suggests that the semantic content of the logical signs is not entirely encapsulated by their clauses in the semantics, but also depends on the notion of consequence at the pre-logical level (i.e., derivability in a base). This is also witnessed in the work by Gheorghiu et al.~\cite{IMLL,BI} for substructural logics in which the notion of base was itself made substructural.  

Finally, we would draw attention to the fact that in the proof of completeness, certain signs --- namely, $\bot$, $\lor$, $\exists$ --- required distinctive treatment. While all the other logical connectives were handled essential, as in the definition of classical consequences via a Frege-Hilbert axiomatization, these signs seemingly demanded to be given a proper rule in the simulation base (rather than an `axiom'). On the one hand, this is, perhaps, an accident of the setup and has no real significance. On the other hand, it may say something fundamental about the logical treatment of `choice' and `uncertainty'. Indeed, negation/absurdity is already  well-known to be subtle in proof-theoretic semantics --- see, for example K\"urbis~\cite{kurbis2019proof} and Gheorghiu and Pym~\cite{PTV-BES,NAF}. 

\subsection*{Funding}
This work was supported by the EPSRC grant EP/R006865/1.

\subsection*{Acknowledgements}
We are grateful to Tao Gu and David J. Pym for the many discussions that led to the development of this paper.

\bibliographystyle{abbrv}
\bibliography{bib}

\end{document}